\documentclass[plain]{amsart}
\usepackage[toc,page]{appendix}
\usepackage{amssymb,graphicx,epsfig,amsxtra, amsmath}
\usepackage{amscd} 
\usepackage{xypic}
\usepackage[mathscr]{eucal}
\usepackage{a4}
\input{xy}
\xyoption{all}
\newtheorem{theorem}{\textbf{Theorem}}[section]
\newtheorem{proposition}[theorem]{\textbf{Proposition}}
\newtheorem{definition}[theorem]{\textbf{Definition}}
\newtheorem{lemma}[theorem]{\textbf{Lemma}}
\newtheorem{corollary}[theorem]{\textbf{Corollary}}
\newtheorem{remark}[theorem]{\textbf{Remark}}

\newtheorem{definition-remark}{\textbf{Definition-Remark}}

\def\ag{\`a}

\def\Pic{\operatorname{Pic}}
\def\Sing{\operatorname{Sing}}

\def\p{\rm{p}}
\def\N{\mathcal N}
\def\d{\delta}

\begin{document}

\title[Curves with a triple point on a $K3$ surface]{On the existence of curves with a triple point on a $K3$ surface}
\author{Concettina Galati }
\address{Dipartimento di Matematica\\
 Universit\ag\, della Calabria\\
via P. Bucci, cubo 31B\\
87036 Arcavacata di Rende (CS), Italy. }
\email{galati@mat.unical.it }

\thanks{The author was partially supported by GNSAGA of INdAM
and by the PRIN 2008 'Geometria delle variet\ag\,  algebriche e dei loro spazi di moduli', co-financied by MIUR}

\subjclass{14B07, 14H10, 14J28}

\keywords{Severi varieties, $K3$ surfaces, versal deformations, space curve singularities, 
triple point}

\date{23.02.12}

\dedicatory{}

\commby{}


\begin{abstract}
Let $(S,H)$ be a general primitively polarized $K3$ surface 
of genus $\p$ and let  $p_a(nH)$ be the arithmetic genus of $nH.$ 
We prove the existence in $|\mathcal O_S(nH)|$ of curves
with a triple point and $A_k$-singularities. In particular, we show the existence
 of curves of geometric genus $g$  in $|\mathcal O_S(nH)|$ 
with a triple point and nodes as singularities and corresponding to regular
points of their equisingular deformation locus, for every $1\leq g\leq p_a(nH)-3$
and $(\p,n)\neq (4,1).$
Our result is obtained by studying the 
versal deformation space of a non-planar quadruple point.
\end{abstract}


\maketitle

\section{Introduction}
Let $S$ be a complex smooth projective $K3$ surface and let $H$ be a very ample 
line bundle on $S$ of sectional genus ${\p} =p_a(H) \geq 2.$ 
 The pair $(S,H)$ is called a {\it  primitively polarized 
$K3$ surface of genus $\p.$}  It is well-known that a (very) general such pair satisfies 
$\Pic S \cong \mathbb Z[H].$
We denote
by $\mathcal V_{nH,\delta}^S\subset |\mathcal O_S(nH)|=|nH|$ the {\it Zariski closure} of the Severi variety
of $\delta$-nodal curves, defined as the locus of
{\it irreducible} and reduced curves with exactly 
$\delta$ nodes as singularities. The non-emptiness of these varieties for every $\delta\leq\dim(|nH|)=p_a(nH),$ 
where $p_a(nH)$ is the arithmetic genus of $nH,$ has been established in \cite{C}.
Like the Severi variety of $\delta$-nodal plane curves, $\mathcal V_{nH,\delta}^S$
has several good properties.   By \cite{tan}, we know that  it
is smooth of expected dimension at every point $[C]$ corresponding to 
a $\d$-nodal curve. This implies that every irreducible component $V$ of $\mathcal V_{nH,\delta}^S$ has codimension
$\d$ in $|nH|$ and it is contained in at least one irreducible component of the Severi variety $\mathcal V_{nH,\delta-1}^S.$  
Moreover, it is also known that  $\mathcal V_{nH,\delta}^S$ coincides with the variety 
$\mathcal V_{nH,g}^S\subset |nH|,$ defined as the Zariski closure of the locus of reduced and irreducible curves of
geometric genus $g$ (cf. \cite[Lemma 3.1]{C} and \cite[Remark 2.6]{galati-knutsen}).  
Unlike the Severi variety of $\delta$-nodal plane curves, nothing is known about the irreducibility
of $\mathcal V_{nH,\delta}^S.$ Classically, the irreducibility problem of Severi varieties is related to the
problem of the description of their boundary (cf. \cite{harris}).  
\begin{trivlist}
\item[\hspace\labelsep{\bf  Problem 1}]
{\em Let $V\subset \mathcal V_{nH,\delta}^S$ be an irreducible component and  let 
$V^o\subset V$ be the locus of $\delta$-nodal curves. 
What is inside the boundary $V\setminus V^o$?  
 Does $V$  contain divisors 
$V_{n}$, $V_{tac}$, $V_{c}$ and $V_{tr}$ whose general element corresponds to a  
curve with $\delta+1$ nodes, a tacnode and $\delta-2$ nodes, a cusp and $\delta-1$
nodes and a triple point and $\delta-3$ nodes, respectively?}
\end{trivlist}
Because of the literature about Severi varieties of plane curves (cf. \cite{diaz_harris1}, \cite{diaz_harris2} and \cite{F}), the divisors 
$V_{n}$, $V_{tac}$, $V_{c}$ and $V_{tr}$ of $V\subset \mathcal V_{nH,\delta}^S$, when non-empty,
are expected to play an important role in the description of the Picard group $\Pic(V)$ or, more precisely,
of the Picard group of a "good partial compactification" of the locus $V^o\subset V$ of $\delta$-nodal curves.  
Proving the existence of these four divisors in {\em every} irreducible component $V$ of  $\mathcal V_{nH,\delta}^S$
is a very difficult problem. The first progress towards answering the previous question has been made in \cite{galati-knutsen}.
In that paper, the authors prove the existence of irreducible curves in $|nH|$ of every allowed genus with a tacnode or a cusp
and nodes as further singularities.  This article is devoted to the existence of irreducible curves in $|nH|$ of geometric genus 
$1\leq g\leq p_a(nH)-3$ with a triple point and nodes as further singularities.  Before introducing our result,
we make some observations concerning Problem 1, describing the type of singularity of $V$ along
$V_{n}$, $V_{tac}$, $V_{c}$ and $V_{tr},$ whenever  these loci are non-empty.
Let us denote by  $\mathcal V_{nH,g,tac}^S$, $\mathcal V_{nH,g,c}^S$ and
$\mathcal V_{nH,g,tr}^S$ the Zariski closure of the locus in $|nH|$ of reduced and irreducible curves of geometric genus $g$
with a tacnode,  a cusp and a triple point, respectively, and nodes as further singularities. Let $W$ be an irreducible component 
of any of these varieties or of $\mathcal V_{nH,g-1}^S.$ Then, by
\cite[Lemma 3.1]{C}, we have that
$\dim(W) =g-1.$ Thus $W$ is a divisor in at least one irreducible component $V$ of 
$\mathcal V_{nH,\delta=p_a(nH)-g}^S=\mathcal V_{nH,g}^S$ and, by using the same arguments as 
in \cite[Section 1]{diaz_harris}, we also know what $V$
looks like in a neighborhood of the general point of $W.$  If $W$ is an irreducible component 
of $\mathcal V_{nH,g-1}^S,$ then $\mathcal V_{nH,g}^S$ has an ordinary multiple point of order $\delta+1$
at the general point of $W.$ In particular, at least in principle, there may not be a unique irreducible component $V$ of $\mathcal V_{nH,g}^S$ containing $W.$ On the contrary, if $W$ is any irreducible component of 
$\mathcal V_{nH,g,tac}^S$, $\mathcal V_{nH,g,c}^S$ or
$\mathcal V_{nH,g,tr}^S,$
then $W$ is contained in only one irreducible
component $V$ of $\mathcal V_{nH,g}^S$. 
In particular, $\mathcal V_{nH,g}^S$ is smooth at the general point of every irreducible 
component of $\mathcal V_{nH,g,tac}^S$ and $\mathcal V_{nH,g,tr}^S$ and it looks like 
the product of a cuspidal curve and a smooth $(g-1)$-dimensional variety in a neighborhood of the general point
of an irreducible component of $ \mathcal V_{nH,g,c}^S.$
We finally observe that, unlikely
the case of Severi varieties of plane curves, if $V\subset\mathcal V_{nH,g}^S$ is an irreducible component, 
a priori, there may exist divisors in $V$
parametrizing curves with worse singularities than an ordinary triple point, a tacnode or a cusp and nodes. 
We now describe the results of this paper. Our theorem about
non-emptiness of $\mathcal V_{nH,g,tr}^S$ is based on the following local problem.
\begin{trivlist}
\item[\hspace\labelsep{\bf  Problem 2}]
{\em  Let $\mathcal X\to\mathbb A^1$ be a smooth family of surfaces with smooth general fiber
$\mathcal X_t$ and whose special fiber $\mathcal X_0=A\cup B$
is reducible, having two irreducible components intersecting transversally along a 
smooth curve $E=A\cap B.$ What kind of curve singularities on $\mathcal X_0$ at a point $p\in E$
may be deformed to a triple point on $\mathcal X_t$? }
\end{trivlist}
In the first part of Section \ref{sect: getting triple point}, we prove that, 
if a triple point on $\mathcal X_t$ specializes to a general point $p\in E$
along a smooth bisection $\gamma$ of $\mathcal X\to\mathbb A^1,$ then the limit curve singularity on 
$\mathcal X_0$ is a space quadruple point, union of two nodes having one branch tangent to $E.$
The result is obtained by a very simple argument of limit linear systems theory, with the same techniques 
as in \cite{galati}. In Lemma \ref{lemma: analytic-type}
we find the analytic type of this quadruple point. This allows us to compute the versal deformation space
of our limit singularity and to prove that, under suitable hypotheses, it actually deforms to an ordinary triple point singularity
on $\mathcal X_t,$ see Theorem \ref{th: triple-point}. In the last section, we consider
the case that $\mathcal X_t$ is a general primitively
polarized $K3$ surface and $A=R_1$ and $B=R_2$ are two rational normal scrolls. In Lemma \ref{limiti},
we prove the existence on $\mathcal X_0=R_1\cup R_2$  of suitable curves with a non-planar quadruple
point as above, tacnodes (of suitable multiplicities) and nodes and that may be deformed to curves in $|\mathcal O_{\mathcal X_t}(nH)|$
with an ordinary triple point and nodes as singularities. In particular, this existence result is obtained as a corollary
of the following theorem, which is to be considered the main theorem of this paper. 
\begin{theorem} \label{main-theorem}
Let $(S,H)$ be a general primitively polarized $K3$ surface of genus ${\p}=p_a(H).$ 
 Then, for every $(\p,n)\neq (4,1)$ and for every $(m-1)$-tuple of non-negative integers $d_2,\ldots,d_m$ such that
 \begin{equation}\label{ppari}
\sum_{k=2}^m(k-1)d_k=n(\p-2)-3=2nl-2n-3
 \end{equation}
if $\p=2l$ is even, or 
\begin{equation}\label{pdispari}
\sum_{k=2}^m(k-1)d_k=n(\p-1)-4=2nl-4,
 \end{equation}
 if $\p=2l+1$ is odd, there exist reduced irreducible curves $C$ in the linear system $|nH|$ on $S$ 
 such that:
 \begin{itemize}
 \item $C$ has an ordinary triple point, $p_a(nH)-\sum_{k=2}^m(k-1)d_k-4$ nodes and $d_k$ singularities
 of type $A_{k-1},$ for every $k=2,\ldots,m,$ and no further singularities;
 \item  $C$ corresponds to a regular point of the equisingular deformation locus $ES(C).$ Equivalently, 
 $\dim(T_{[C]}ES(C))=0.$ 
 \end{itemize} 
 Finally, the singularities of $C$ may be smoothed independently. In particular, under the hypotheses \eqref{ppari}
 and \eqref{pdispari}, for every $\delta_k\leq d_k$ and for every $\delta\leq \dim(|nH|)-\sum_{k=2}^m(k-1)d_k-4,$ 
 there exist curves $C$ in the linear system $|nH|$ on $S$ with an ordinary triple point, $\delta_k$ singularities of type $A_{k-1},$
 for every $k=2,...,m,$ and $\delta$ nodes as further singularities and corresponding to regular points of their equisingular 
 deformation locus.
\end{theorem}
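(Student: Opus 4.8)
The plan is to realize the desired curves on a general $K3$ surface by a degeneration argument, pushing the local analysis of Problem 2 through a global geometric construction. First I would recall from \cite{C} and \cite{galati-knutsen} the standard degeneration of the polarized $K3$ surface $(S,H)$: there is a smooth family $\mathcal X\to\mathbb A^1$ whose general fiber $\mathcal X_t$ is a general primitively polarized $K3$ of genus $\p$ and whose central fiber $\mathcal X_0=R_1\cup R_2$ is the union of two rational normal scrolls meeting transversally along a smooth elliptic curve $E\in|{-}K_{R_i}|$. The numerical conditions \eqref{ppari} and \eqref{pdispari}, distinguishing the parities of $\p$, are exactly those that make $R_1$ and $R_2$ have the appropriate bidegrees so that the line bundle restricting to $nH$ on the general fiber splits on $\mathcal X_0$ into effective classes on the two scrolls whose arithmetic genera and $E$-intersection numbers leave room for the prescribed singularities. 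On each scroll I would build, using the structure of $R_i$ as a $\mathbb P^1$-bundle, reduced curves $C_i$ whose union $C_0=C_1\cup C_2$ has: a non-planar quadruple point at a general point $p\in E$ of the analytic type identified in Lemma \ref{lemma: analytic-type} (two nodes, each with one branch tangent to $E$); the prescribed $A_{k-1}$-singularities and nodes, arranged away from $E$ or transverse to $E$; and otherwise meets $E$ transversally. This is precisely the content of the cited Lemma \ref{limiti}.

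Next I would invoke Theorem \ref{th: triple-point}: since the quadruple point of $C_0$ sits at a general point of $E$ and has the analytic type of Lemma \ref{lemma: analytic-type}, its versal deformation space contains a stratum along which it deforms, as $t$ varies, to an ordinary triple point on $\mathcal X_t$, while the remaining singularities $A_{k-1}$ and the nodes, being away from (or transverse to) $E$, deform trivially to singularities of the same type on nearby fibers. The key point is that the relevant equisingular/equigeneric deformations of $C_0$ inside $\mathcal X_0$ can be matched with sections of the restricted line bundle that extend to the total family; here one uses that the obstruction space and the relevant cohomology vanish because $R_1,R_2$ are rational (so $h^1$ of the relevant twists vanishes) and because the quadruple point is imposed at a \emph{general} point of $E$, which gives the transversality needed to conclude that the limit linear system computation of \cite{galati} applies. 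Counting parameters: the space of curves in $|nH|$ with an ordinary triple point, $d_k$ singularities of type $A_{k-1}$ and $\delta=p_a(nH)-\sum(k-1)d_k-4$ nodes has expected codimension $4+\sum(k-1)d_k+\delta=p_a(nH)$ in $|nH|$, i.e. expected dimension zero — consistent with $ES(C)$ being a regular point — and the degeneration produces exactly a point of this locus, so the expected dimension is attained and $\dim T_{[C]}ES(C)=0$.

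For the final sentence, that the singularities may be smoothed independently and that one can realize any $\delta_k\le d_k$ and any $\delta\le\dim|nH|-\sum(k-1)d_k-4$: this follows from the regularity statement just proved together with the standard theory of independent smoothing of singularities on surfaces (as in \cite{tan} for nodes, and its extension to $A_k$-singularities and ordinary triple points). Concretely, because $[C]$ is a regular point of $ES(C)$, the natural map from the base of a joint versal deformation of all the singularities of $C$ to the corresponding tangent-cohomology group of $S$ is surjective; hence the discriminantal strata meet transversally and one can independently partially smooth: keep the ordinary triple point, smooth $d_k-\delta_k$ of the $A_{k-1}$-points, and $\delta' := \dim|nH|-\sum(k-1)d_k-4-\delta$ of the nodes (noting an $A_{k-1}$ singularity, when partially smoothed, itself produces nodes and lower $A_j$'s, which only enlarges the available range), landing in the asserted loci and again at regular points of their equisingular deformation locus by the same surjectivity.

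The main obstacle, as I see it, is the gluing/deformation step: one must check that a chosen equisingular deformation of the reducible curve $C_0=C_1\cup C_2$ on the central fiber $\mathcal X_0$, carrying the non-planar quadruple point at $p\in E$, actually lifts to a family of curves in $|\mathcal O_{\mathcal X_t}(nH)|$ over the disk with the quadruple point deforming to a genuine \emph{ordinary} triple point (and not to something worse or to a curve that fails to be irreducible). Theorem \ref{th: triple-point} is designed to handle exactly this, but applying it requires verifying its hypotheses — chiefly that the quadruple point is at a general point of $E$ with the correct analytic type, that the scroll curves $C_i$ can be chosen with the right tangency conditions along $E$ and with the other singularities in sufficiently general position, and that the relevant $h^1$-vanishing holds so that there is no obstruction to extending sections; the exclusion $(\p,n)\neq(4,1)$ is precisely where the dimension bookkeeping on the scrolls breaks down and so must be carried along throughout.
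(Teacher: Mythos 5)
Your outline diverges from the paper's actual argument at the crucial step, and the divergence is a genuine gap rather than an alternative route. You propose to build, on the central fiber $R_1\cup R_2$, a limit curve that already carries the prescribed $d_k$ singularities of type $A_{k-1}$ together with the non-planar quadruple point, and you assert this is "precisely the content of Lemma \ref{limiti}." It is not: the limit curve of Lemma \ref{limiti} is a union of rulings and of the two special rational curves $L_i$, and its singularities on $E$ are the space quadruple point at $p$, one \emph{large} tacnode (a $(2nl-3)$-tacnode for odd $\p$, a $(2nl-2n-2)$-tacnode for even $\p$) at $q$, plus nodes and, in the even case, simple tacnodes; no $A_{k-1}$ singularities with $k\ge 3$ are present in the limit. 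The numerical conditions \eqref{ppari} and \eqref{pdispari} are not a bidegree-splitting condition, as you suggest, but record the multiplicity of that tacnode: the $d_k$ singularities of type $A_{k-1}$ on the general fiber are \emph{produced} by deforming the tacnode at $q$, using the versal-deformation analysis of tacnodes from \cite{galati-knutsen}, simultaneously with deforming the quadruple point at $p$ to an ordinary triple point via Theorem \ref{th: triple-point}. Your plan of imposing the $A_{k-1}$'s directly on the scroll curves and letting them "deform trivially" has no mechanism behind it (the components of curves in $|\mathcal O_R(nH)|$ restricted to the scrolls are forced to be the rational curves of the construction), and it bypasses the actual engine of the proof.

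The second gap is the regularity claim. From "the locus has expected dimension zero and the degeneration produces a point of it" you conclude $\dim T_{[C_t]}ES(C_t)=0$; this is a non sequitur — attaining the expected dimension at the constructed point is exactly what must be proved. In the paper this is done by showing that the map $\Phi\colon H^0(\pi^*(C),\mathcal N_{\pi^*(C)|\mathcal X})\to \oplus T^1$ to the sum of the local versal deformation spaces is \emph{injective}, which in turn rests on the delicate component-by-component argument (inside the proof of Lemma \ref{limiti} and repeated for $\pi^*(C)$) that the equisingular tangent space of the limit curve is a single reduced point; one then identifies the image of $\Phi$ as a hyperplane $\Omega\subset H_p\oplus H_q$ (plus the full $T^1$ at the nodes off $E$) containing $\Gamma_p\oplus\Gamma_q$, and must check that $(T\times V_{1^{d_2},\ldots,(m-1)^{d_m}})\cap\Omega$ is a nonempty reduced curve — a transversality statement in the product of the two versal deformation spaces, argued as in Caporaso--Harris, not a consequence of $h^1$-vanishing or of the generality of $p$. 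Two further points your outline misses: the nodes of the limit curve lying on $E$ are necessarily smoothed in any deformation to $\mathcal X_t$ (so they cannot be "preserved transversally"), and the construction must be arranged so that $q_{2n}$ coincides with one of the sixteen special points $\xi_i$, replacing $C$ by $\pi^*(C)=\tilde C\cup E_1$ with its extra node at $x$, which is what makes the dimension count $h^0(\mathcal N_{\pi^*(C)|\mathcal X})=\dim|nH|+1$ come out right. Your final paragraph on independent smoothing is essentially correct once regularity is established, but as it stands the two central steps — producing the $A_{k-1}$'s from the tacnode and proving regularity via the injectivity of $\Phi$ — are missing.
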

By Corollary \ref{primitive-corollary}, the family in 
$|H|$ of curves with a triple point and $\delta_k$ singularities of type $A_{k-1}$
is non-empty if it has expected dimension at least equal to one or it has expected dimension equal to 
zero and $\delta_2\geq 1.$
\subsection*{Acknowledgments} 
I would like to thank M. Giusti for offering to send me personal
notes of \cite{giusti} during the holiday time of my library. My intention to write this paper became
stronger after a conversation with C. Ciliberto about related topics. I also benefited from 
conversation with T. Dedieu and A. L. Knutsen. Finally, I would like to thank the referee for
many helpful comments and corrections.

\section{Notation and terminology}\label{notation-terminology}
Throughout the paper an irreducible curve $C$ will be a reduced and irreducible projective curve,
even if not specified. We will be concerned here only with reduced and locally complete intersection curves
 $C$ in a linear system $|D|$ on a (possibly singular) surface $X,$ having, as singularities, space
 singularities, plane ordinary triple points or plane singularities of type $A_k.$
 We recall that an ordinary triple point has analytic equation $y^3=x^3$ while an $A_k$-singularity has 
 analytic equation $y^2-x^{k+1}.$ Singularities of type $A_1$ are nodes,
$A_2$-singularities are cusps and an $A_{2m-1}$-singularity is a so called $m$-tacnode.
Whenever not specified, a tacnode will be a $2$-tacnode, also named a simple tacnode. 
For curves with ordinary plane triple points and plane singularities of type $A_k,$  equisingular
deformations from the analytic point of view coincide with equisingular deformations from the Zariski 
point of view (cf. \cite[Definition (3.13)]{diaz_harris}). Because of this,  given a curve as above,
we define the equisingular deformation locus $ES(C)\subset |D|$ as the Zariski closure of the locus of 
analytically equisingular deformations
of $C$, without discrepancy with classical terminology. 
Finally, we recall that, if $X$ is a smooth surface and $C$ has as singularities an ordinary triple
point and $d_k$ singularities of type $A_k,$ then the dimension of the tangent space to $ES(C)$ at the
point $[C]$ corresponding to the curve $C$ is given by $\dim(T_{[C]}ES(C))\geq\dim|D|-4-\sum_kd_k k$
(cf. \cite{diaz_harris}).
If equality holds, we say that {\em $ES(C)$ is regular at } $[C].$ The regularity of 
 $ES(C)$ is a sufficient and necessary condition for the surjectivity 
of  the standard morphism
$H^0(C,\N_{C|X})\to T^1_C.$ In this case, we say that {\em the singularities
of $C$ may be smoothed independently}, with obvious meaning because of the versality properties of 
$T^1_C.$

\section{How to obtain curves with a triple point on a smooth surface}\label{sect: getting triple point}
Let $\mathcal X\to\mathbb A^1$ be a smooth family of projective complex surfaces with smooth general fiber
$\mathcal X_t$ and whose special fiber $\mathcal X_0=A\cup B$
has two irreducible components, intersecting transversally along a 
smooth curve $E=A\cap B.$ Let $D\subset\mathcal X$ be a Cartier divisor  and let us set
$D_t=D\cap\mathcal X_t.$ 
 We want to find sufficient conditions for the existence 
of curves with a triple point in $|D_t|=|\mathcal O_{\mathcal X_t}(D_t)|.$
We first ask the following question.
Assume that, for general $t$, there exists a reduced and irreducible divisor $C_t\in 
|D_t|$ with a triple point.
Assume, moreover, that the curve $C_t$ degenerates to 
a curve $C_0\in |D_0|=|\mathcal O_{\mathcal X_0}(D_0)|$ in such a way that the triple point of $C_t$
comes to a general point $p\in E=A\cap B.$ We ask for the type of singularity of $C_0$ at $p.$
Actually we do not want to find all possible curve singularities at $p$ that are limit of a triple
point on $\mathcal X_t.$ We only want to find a suitable limit singularity. We first observe that,
since $p$ belongs to the singular locus of $\mathcal X_0$ and $\mathcal X$ is smooth at $p,$
there are no sections of $\mathcal X\to\mathbb A^1$ passing through $p.$ Thus the triple point of 
the curve $C_t,$ as $t\to 0,$ must move along a multisection $\gamma^\prime$ of  $\mathcal X\to\mathbb A^1.$ 
In order to deal with a divisor $S$ in a smooth family of surfaces $\mathcal Y\to\mathbb A^1$
with a triple point at the general point of a section of $\mathcal Y\to\mathbb A^1,$ we make a base change of 
$\mathcal X\to\mathbb A^1.$\smallskip

Let $\mathcal Y\to\mathbb A^1$ be the smooth family of surfaces
\begin{displaymath}
\xymatrix{
\mathcal Y\ar[r]\ar[dr]&  \mathcal X^\prime \ar[d]\ar[r] &
\mathcal X\ar[d]\\
& \mathbb A^1\ar[r]^{\nu_2}&\mathbb A^1  }
\end{displaymath}
obtained from $\mathcal X\to\mathbb A^1$ after a base change of order two totally ramified at $0$
and by normalizing the achieved family.  
Now the family $\mathcal Y$ has general fibre $\mathcal Y_t\simeq\mathcal X_t$ and special 
fibre $\mathcal Y_0=A\cup \mathcal E\cup B$, where, by abusing notation, $A$ and $B$ are 
the proper transforms of $A$ and $B$ in $\mathcal X$ and $\mathcal E$ is a $\mathbb P^1$-bundle 
on $E.$ In particular, $A\cap\mathcal E$ and $B\cap\mathcal E$ are two sections of $\mathcal E$ 
isomorphic to $E.$ Denote by $F$ the fibre $\mathcal E$ corresponding to the point $p\in E\subset \mathcal X_0$ and
let $\gamma$ be a section of $\mathcal Y$ intersecting $F$ at a general point $q.$ 
Assume there exists a divisor $S\subset \mathcal Y$ having a triple point at the general 
point of $\gamma.$ 

{\em Step 1.} Let $\pi_1:\mathcal Y^1\to\mathcal Y$ be the blow-up of $\mathcal Y$ along $\gamma$ 
with new exceptional divisor $\Gamma$ and special fibre $\mathcal Y^1_0=A\cup \mathcal E^\prime\cup B,$
where $\mathcal E^\prime$ is the blow-up of $\mathcal E$ at $q.$
Still denoting by $F$ the proper transform of $F\subset \mathcal Y$ in $\mathcal Y^1,$ we have that
$F$ has self-intersection $(F)^2_{\mathcal E^\prime}=-1$ on $\mathcal E^\prime.$
Morever,  if $S^1$ is the proper transform of $S$ in $\mathcal Y^1,$ we have 
that 
\begin{equation}\label{step1}
S^1\sim \pi_1^*(S)-3\Gamma.  
\end{equation}
We deduce that $S^1F=-3$ and hence $F\subset S^1.$

{\em Step 2.} Let now $\pi_2:\mathcal Y^2\to\mathcal Y^1$ be the blow-up of $\mathcal Y^1$ along $F$
with new exceptional divisor $\Theta\simeq \mathbb F_1$ and new special fibre 
$\mathcal Y^2_0=A^\prime\cup \mathcal E^\prime\cup\Theta\cup B^\prime,$
where $A^\prime$ and $B^\prime$ are the blow-ups of $A$ and $B$ at $F\cap A$ and $B\cap F$ respectively.
Denoting again by $F$ the proper transform of $ F\subset\mathcal Y^1$ in $\mathcal Y^2,$
we have that $F$ is the $(-1)$-curve of $\Theta.$ Moreover,
if $S^2$ is the proper transform of $S^1$
in $\mathcal Y^2,$ by \eqref{step1}, we deduce that 
\begin{equation}\label{step2}
S^2|_{\Theta}\sim\pi^*_2(S^1)|_{\Theta}-m_F\Theta|_{\Theta}
\sim -3f_\Theta+m_F(F+2f_\Theta)\sim (2m_F-3)f_\Theta+m_FF,
\end{equation}
where $f_\Theta$ is the fibre of $\Theta$ and $m_F$ is the multiplicity of $S^1$ along $F.$
Furthermore, since $S^2|_{\Theta}$ is an effective divisor, we have that $m_F\geq 2.$
In particular, if $m_F=2$ then $|S^2|_{\Theta}|=|f_\Theta+2F|$ contains $F$ in the base locus
with multiplicity $1.$ Hence $S^2|_{\Theta}=F+L,$ with $L\sim f_\Theta+F.$ Using again that
$S^2$ is a Cartier divisor, we find that $S^2|_{A^\prime}$ (resp. $S^2|_{B^\prime}$)
has two smooth branches intersecting $\Theta\cap A^\prime$  
transversally at $F\cap A^\prime$ and $L\cap A^\prime$ (resp. $F\cap B^\prime$ and $L\cap B^\prime$), 
as in Figure 1. 
\begin{figure}[htbp]
\noindent
\begin{minipage}[t]{0.50\textwidth}
\includegraphics[width=6 cm]{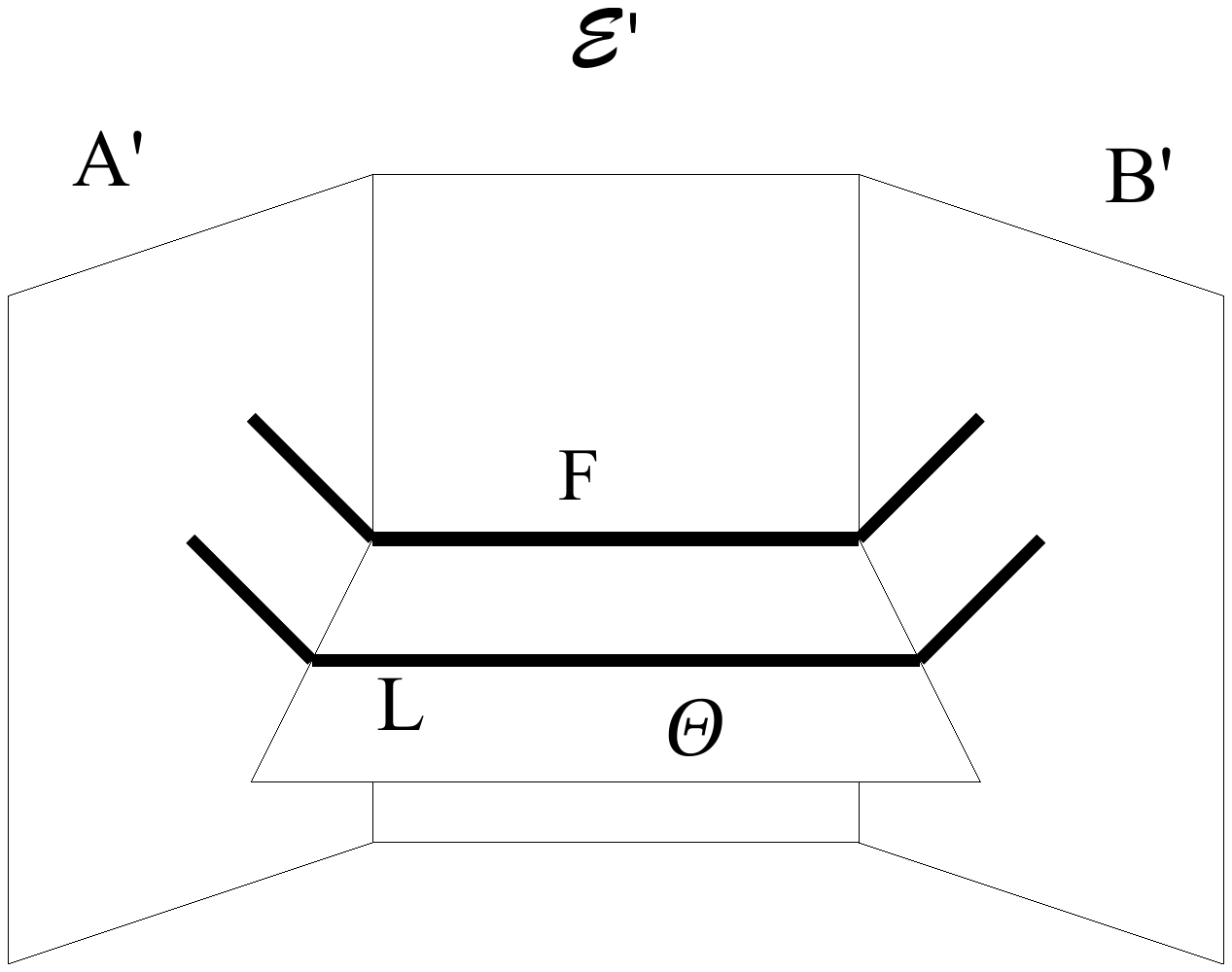}
\caption{}
\end{minipage}
\begin{minipage}[t]{0.47\textwidth}
\includegraphics[width=6 cm]{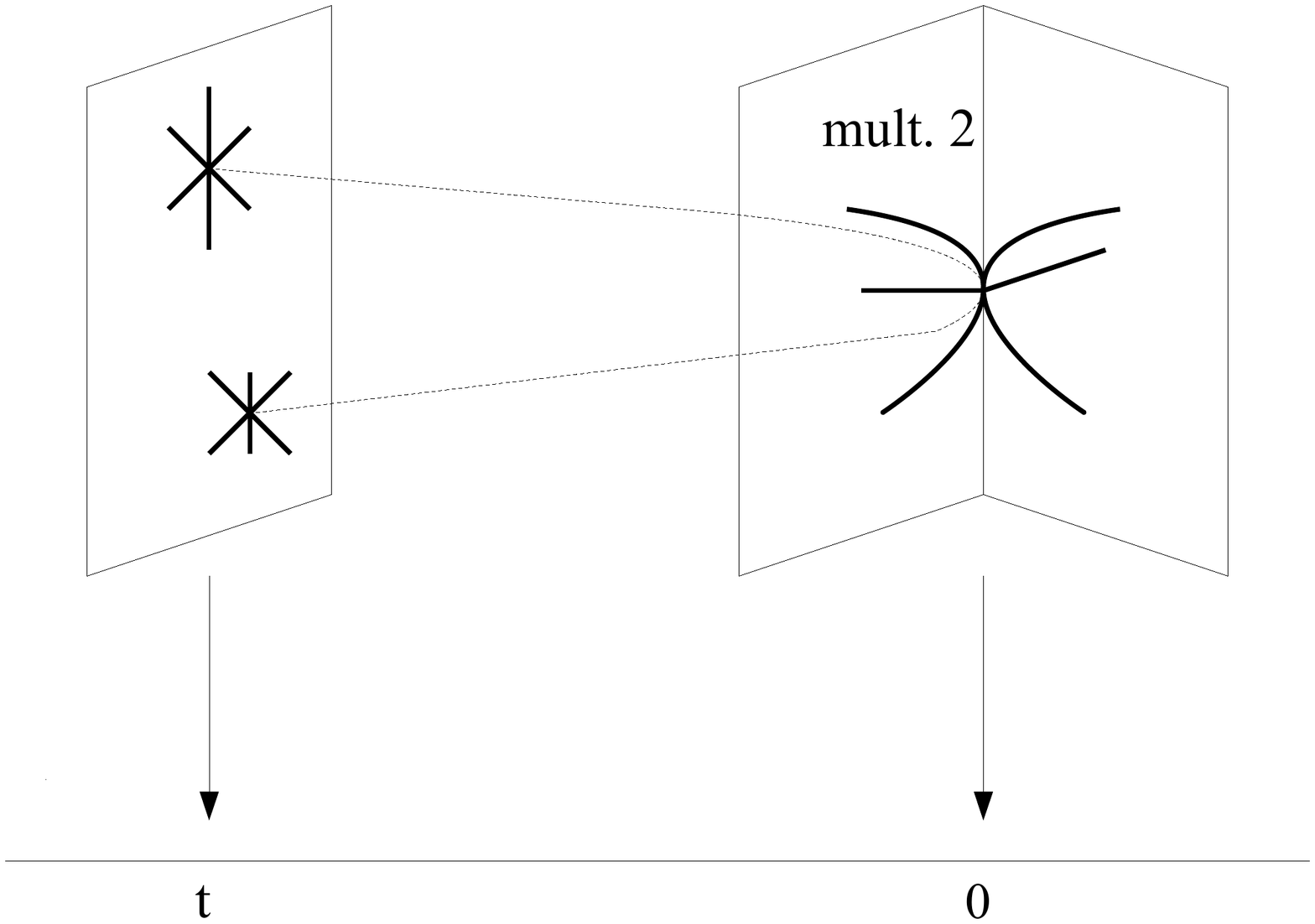}
\caption{}
\end{minipage}\hfill
\end{figure}

\noindent Now let $\mathcal S\subset\mathcal X$ be the image of $S.$ If $t$ is a general point of 
$\mathbb A^1$ and $\{t_1, \,t_2\}=\nu_2^{-1}(t)$, then the fibre of $\mathcal S$ over
$t$ is $\mathcal S_t=S_{t_1}\cup S_{t_2},$ where $S_{t_i}$ is the 
fibre of $S$ over $t_i;$  while the special fibre $\mathcal S_0=2(S|_A\cup S|_B)$ of
$\mathcal S$ is the image curve, counted with multiplicity $2,$   of the special fibre $S_0$ of
$S$ under the contraction of $\mathcal E$ (see Figure 2). 
\begin{remark}
The curve $S|_A\cup S|_B\subset\mathcal X_0$ belongs to the subvariety  $\mathcal Q_p\subset |D_0|,$ defined 
as the Zariski closure of the locus of curves  
$C=C_A\cup C_B,$ where $C_A\subset A$ and $C_B\subset B$ have a node at 
$p\in E=A\cap B$ with one branch tangent to $E.$ 
Notice that every such curve $C$ has a non-planar quadruple point at $p$  and that $\mathcal Q_p$
is a linear subspace of $|D_0|$ of codimension at most $5.$
\end{remark} 

\begin{lemma}\label{lemma: analytic-type}
Let $[C]\in Q_p$ be a point corresponding to a curve $C=C_A\cup C_B$ with a non-planar 
quadruple point at $p$ as in the previous remark. Then the analytic equations
of $C$ at $p$ are given by 
\begin{eqnarray}\label{eq: quadruple-point}
\left\{\begin{array}{l}
(y+x-z^2)z=0\\
xy=t \\
t=0.\\
\end{array}\right.
\end{eqnarray}
\end{lemma}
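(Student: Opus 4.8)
The plan is to work in the local analytic ring at $p$ inside the total space $\mathcal X$, which (since $\mathcal X$ is smooth) is $\mathbb C\{x,y,z,t\}$, and to write down the equation cutting out the threefold $\mathcal X_0\cup(\text{rest of }\mathcal X)$ near $p$. First I would choose analytic coordinates so that $\mathcal X_0=A\cup B$ is defined by $xy=0$ near $p$, with $A=\{x=0\}$ and $B=\{y=0\}$, and so that the smoothing parameter is $t$; the versal picture of a node of the total space along $E$ gives $xy=t$ for $\mathcal X$ itself, with $E=A\cap B$ the locus $\{x=y=0,\ t=0\}$ and the coordinate $z$ running along $E$. Then the surface $\mathcal X_0$ is recovered as $\{t=0\}\cap\{xy=0\}$, which is the last two equations of \eqref{eq: quadruple-point}; it remains to identify the curve $C=C_A\cup C_B$ inside it by the first equation.

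**Identifying the curve equation.** The curve $C_A\subset A=\{x=0,t=0\}$ is a curve on the smooth surface $A$ with a node at $p$, one of whose branches is tangent to $E=\{x=0,y=0,t=0\}$. In the coordinates $(y,z)$ on $A$ near $p$, a node with one branch tangent to $E=\{y=0\}$ has equation, after an analytic change of coordinates fixing $E$ to first order, of the form $y(y-z^2)=0$ up to higher-order terms and units; the standard normal form of such a tangential node is exactly $y(y-z^2)=0$. Symmetrically $C_B\subset B=\{y=0,t=0\}$ has equation $x(x-z^2)=0$ in coordinates $(x,z)$. The key point is then to exhibit a single element $F\in\mathbb C\{x,y,z,t\}$ whose restriction to $A$ is $y(y-z^2)$ (up to unit) and whose restriction to $B$ is $x(x-z^2)$ (up to unit), because $C=C_A\cup C_B$ as a Cartier divisor on $\mathcal X_0$ must be cut out on the two components compatibly along $E$. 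The candidate is $F=(y+x-z^2)z$: restricting to $x=0$ gives $(y-z^2)z$ and to $y=0$ gives $(x-z^2)z$, so each branch of $C_A$ (resp. $C_B$) is matched after absorbing the common factor $z$, which is the equation of $E$ on each component — i.e. $C_A$ together with $E$ inside $A$, and $C_B$ together with $E$ inside $B$. I would argue that including the component $E$ on each side is forced: a Cartier divisor on the reducible surface $xy=0$ that restricts to a curve through $p$ on each component and is not supported on $E$ would have to glue along $E$, and the gluing data is precisely what the factor $z$ records.

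**Verifying it is the right non-planar quadruple point.** Having written the candidate system \eqref{eq: quadruple-point}, I would check directly that it defines a curve with a non-planar quadruple point at $p$: it is the union of the four smooth branches $\{x=0,z=0\}$, $\{x=0,y=z^2\}$ on $A$ and $\{y=0,z=0\}$, $\{y=0,x=z^2\}$ on $B$, no three of them coplanar, so the Zariski tangent space at $p$ is $3$-dimensional and the multiplicity is $4$. Finally I would confirm that the resulting curve is independent of the choices made — i.e. that any $[C]\in Q_p$ is analytically isomorphic to \eqref{eq: quadruple-point} at $p$ — by invoking the finite determinacy of this singularity and the fact that the moduli of such a configuration (two tangential nodes on the two branches of an ordinary double surface point, glued along $E$) are rigid: the normal form of a tangential node on a smooth surface is unique, and the two gluings are identified by the single coordinate $z$ along $E$.

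**Main obstacle.** The delicate step is the second one: justifying that the global Cartier condition on the reducible surface $\mathcal X_0$ forces exactly the factor $z$ (equivalently, forces $C$ to contain $E$ with multiplicity one on each component, so that the "honest" tangential-node curves $C_A$ and $C_B$ are recovered only after removing $E$). One must be careful that the curve under consideration is the limit $S|_A\cup S|_B$ produced by the degeneration in Steps 1--2, which by construction of $\mathcal Q_p$ does pass through $p$ tangentially to $E$ on each side; the content of the lemma is that the gluing/Cartier data pins the mixed term to be linear in $z$ and to mix $x$ and $y$ additively, yielding $(x+y-z^2)z$ rather than some other representative. I would handle this by a direct computation with the versal family equation $xy=t$: any Cartier divisor near $p$ is $(h(x,y,z,t)) $ for $h\in\mathbb C\{x,y,z,t\}$, restricting it to $t=0$ and then to each of $x=0$, $y=0$, matching with the prescribed tangential-node normal forms, and showing the system of matching conditions has the stated solution up to analytic equivalence.
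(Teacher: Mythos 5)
Your coordinate setup (so that $\mathcal X_0=\{xy=0,\ t=0\}$ with $E$ the $z$-axis) agrees with the paper's, but the proposal has a concrete error and, more importantly, omits the actual content of the lemma. The error: on $A$, in coordinates $(y,z)$ with $E=\{y=0\}$, the normal form of a node with one branch tangent to $E$ is $z(y-z^2)=0$ (the branch $\{y=z^2\}$ tangent to $E$, the branch $\{z=0\}$ transverse), not $y(y-z^2)=0$, which is a tacnode having $E$ itself as one branch. Consequently your gluing discussion is incoherent: the factor $z$ in $(y+x-z^2)z$ is not ``the equation of $E$ on each component'' ($z$ restricts to the coordinate along $E$, and $\{z=0\}$ cuts the transverse branch), and the limit curve is not ``$C_A$ together with $E$'' --- $E$ is not a component of $C$, as your own branch list in the verification paragraph shows (there $\{x=0,z=0\}$ is the $y$-axis, not $E$). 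The restriction of the first equation of \eqref{eq: quadruple-point} to $A$ is already exactly $C_A$; nothing has to be ``absorbed''.

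The genuine gap is the reduction of an arbitrary member of $Q_p$ to the fixed normal form. After choosing coordinates, such a curve has local equation $p(x,y,z)=xz+yz+z^3+g(x,y,z)$ on $\{xy=0\}$, where $g$ consists of terms of weight $>3$ for the weights $(2,2,1)$, and the Cartier equation is determined only up to units and the ideal of $\mathcal X_0$; the lemma claims all of $g$ can be removed by an analytic equivalence of the space germ. You ``invoke the finite determinacy of this singularity'' and a ``rigidity'' of the configuration, but that is precisely what must be proved: the singularity is a non-simple complete intersection space curve singularity (as the paper notes), so no classification can be cited, and normalizing $C_A$ and $C_B$ separately by planar coordinate changes on $A$ and on $B$ does not produce one ambient equivalence --- the two changes must be patched along $E$ compatibly with the chosen Cartier equation, a step you defer to an unperformed ``direct computation with matching conditions''. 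The paper supplies exactly this missing ingredient: it observes that $f=(xz+yz+z^3,\,xy)$ is quasi-homogeneous of type $(3,4;\,2,2,1)$, computes $T^1(f)\simeq\mathbb C^7$ with its induced grading $T^1(f)=\oplus_{\nu=-4}^{0}T^1(f)_\nu$, hence $\alpha=0$, and then applies Merle's result (Proposition \ref{merle}) because the perturbation has positive order for these weights. Without this determinacy argument (or an equivalent one), your proposal only shows that \eqref{eq: quadruple-point} defines some curve with the required type of quadruple point, not that every $[C]\in Q_p$ is analytically equivalent to it at $p$.
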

\begin{remark}
We want to observe that, as for plane curve singularities, the classification of
simple complete intersection space singularities is known, (cf. \cite{giusti}).
The singularity defined by \eqref{eq: quadruple-point} is not a simple singularity. 
\end{remark}
Before proving Lemma \ref{lemma: analytic-type}, we need to recall  a basic result about
space curve singularities. We refer to \cite{giusti} and use the same notation and
terminology.

\begin{definition}
Let  $\mathbb Q_+$ be the set of positive rational numbers.
A polynomial $p(\underline x)\in\mathbb C[x_1,...,x_n]$ is said to be quasi-homogeneous
of type $(d;\,a_1,...,a_n)\in\mathbb Q_+\times\mathbb Q^n_+$ if $p(\underline x)$ is a linear
combination of monomials $x^{\alpha_1}_1\cdots x^{\alpha_n}_n$ such that:
$$\sum_{i=1}^na_i\alpha_i=d.$$
The $n$-tuple  $\underline a=(a_1,...,a_n)$ is called a system of weights.
We shall also say that $p(\underline x)$ has degree $d$ if every variable $x_i$ has weight $a_i,$
for every $i\leq n.$ 
\end{definition}
For every fixed system of weights $\underline a=(a_1,...,a_n),$ we will denote by $\Gamma^{\underline a}$
the induced graduation
$$\mathbb C[x_1,...,x_n]=\oplus_{d\geq 0}G_d^{\underline a}$$ 
on $\mathbb C[x_1,...,x_n],$ where $G_d^{\underline a}$ is the set of quasi-homogeneous polynomials
of type $(d;\,a_1,...,a_n).$
\begin{definition}
An element $f=(f_1,...,f_p)\in (C[x_1,...,x_n])^p$ is quasi-homogeneous of type 
$(\underline d,\underline a)=(d_1,...,d_p;\,a_1,...,a_n)$ if, for every $i=1,...,p$, the component 
$f_i$ of $f$ is of type $(d_i;\,a_1,...,a_n).$ The $(p+n)$-tuple $(\underline d,\underline a)=(d_1,...,d_p;\,a_1,...,a_n)$
is called a system of degrees and weights.
\end{definition}
For every fixed system of degrees and weights $(\underline d,\underline a),$ one defines a graduation 
$\Gamma^{(\underline d,\underline a)}$ on $ (C[x_1,...,x_n])^p$ by setting 
$$(C[x_1,...,x_n])^p=\oplus_{\nu\in\mathbb Z}G^{(\underline d,\underline a)}_\nu,$$
where $G^{(\underline d,\underline a)}_\nu=\{(g_1,...,g_p)\in (C[x_1,...,x_n])^p|\,g_i\in 
G^{\underline a}_{d_i+\nu}\}.$
Moreover, we denote by $\nu_{\underline a}$ the valuation naturally associated to 
$\Gamma^{\underline a}$ and by $\nu_{\underline d,\underline a}$ the valuation associated
to $\Gamma^{\underline a,\underline d}$ and defined by 
$$
\nu_{\underline d,\underline a}(h)=\inf_i[\nu_{\underline a}(h_i)-d_i],
$$
for every $h\in (C[x_1,...,x_n])^p.$
Finally, we recall that, if $I_{n,p}$ denotes the set of germs of applications 
$f=(f_1,...,f_p):\mathbb (C^n,\underline 0)\to\mathbb (C^p,\underline 0)$ such $(X,\underline 0)=
(f^{-1}(\underline 0),\underline 0)$
is a germ of a complete intersection analytic variety with an isolated singularity at the origin,
then we have the versal deformation space $T^1(f)=T^1_{X,\underline 0}$ (cf. \cite{GLS} or \cite{ser})  
and,  by \cite{Pi}, a graduation is naturally induced on 
$$
T^1(f)=\oplus_{\nu\in\mathbb Z}T^1(f)_\nu, \textrm{where}\,\,\,T^1(f)_\nu\subset G^{(\underline d,\underline a)}_\nu
\textrm{for every}\,\,\,\nu\in\mathbb Z.
$$
\begin{proposition}[Merle, {\cite[Proposition 1]{giusti}}]\label{merle}
Let $f\in I_{n,p}$ be a quasi-homogeneous element of type $(\underline d,\underline a)$.
Let $g\in (\mathbb C[x_1,...,x_n])^p$ be any element such that:
$$
\nu_{(\underline d,\underline a)}(g)>\sup(0,\alpha),
$$
where $\alpha=Sup_{\nu\in\mathbb Z}\{\nu | T^1(f)_\nu\neq 0\}.$ Then the germs of singularities
$((f+g)^{-1}(\underline 0),\underline 0)$ and $(f^{-1}(\underline 0),\underline 0)$ are analytically equivalent. 
\end{proposition}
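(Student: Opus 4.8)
The plan is to deduce the statement from a weighted-homogeneous finite-determinacy argument, splitting the work into an infinitesimal (tangent-module) computation and an integration step. I first recall that, for germs of isolated complete intersection singularities, the analytic isomorphism type of the fibre $(f^{-1}(\underline 0),\underline 0)$ is exactly the contact ($\mathcal K$-) class of the defining map $f$: two such germs are analytically equivalent if and only if the maps are related by a coordinate change in the source fixing $\underline 0$ together with multiplication by a matrix in $GL_p(\mathcal O_n)$ (cf. \cite{GLS}). Hence it suffices to show that $f$ and $f+g$ are contact equivalent. To this end I would run the homotopy method: set $F_s=f+sg$ for $s\in[0,1]$ and invoke the Thom--Levine criterion, which reduces $\mathcal K$-triviality of the family $\{F_s\}$ to showing that the velocity $\partial_s F_s=g$ lies, for all $s$, in the tangent space to the contact orbit of $F_s$, namely
$$
g\in T\mathcal K F_s:=\mathfrak m_n\Big\langle \frac{\partial F_s}{\partial x_1},\dots,\frac{\partial F_s}{\partial x_n}\Big\rangle+(F_s)\,\mathcal O_n^p,
$$
with coefficients depending analytically on the parameter $s$.

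The heart of the proof is the algebraic verification of this membership, and here quasi-homogeneity enters. Since $f\in I_{n,p}$ has an isolated singularity, $T^1(f)=\mathcal O_n^p/T\mathcal K_e f$ is finite-dimensional, where $T\mathcal K_e f=\mathcal O_n\langle\partial_{x_1}f,\dots,\partial_{x_n}f\rangle+(f)\mathcal O_n^p$ is the extended contact tangent module; because $f$ is quasi-homogeneous of type $(\underline d,\underline a)$, this submodule is homogeneous for $\Gamma^{(\underline d,\underline a)}$, so $T^1(f)=\oplus_\nu T^1(f)_\nu$ carries the grading of the statement (cf. \cite{Pi}). By the very definition of $\alpha$ we have $T^1(f)_\nu=0$ for every $\nu>\alpha$, which means that each homogeneous piece of $G^{(\underline d,\underline a)}_\nu$ with $\nu>\alpha$ maps to $0$ in $T^1(f)$ and therefore already lies in $T\mathcal K_e f$. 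As $T\mathcal K_e f$ is a finitely generated $\mathcal O_n$-submodule of a free module it is closed in the weighted filtration, so summing homogeneous pieces gives $G^{(\underline d,\underline a)}_{>\alpha}\subseteq T\mathcal K_e f$; the hypothesis $\nu_{(\underline d,\underline a)}(g)>\alpha$ then yields $g\in T\mathcal K_e f$ at $s=0$. The second half of the hypothesis, $\nu_{(\underline d,\underline a)}(g)>0$, does the bridging work: writing such an element in homogeneous degree $\nu>0$ as $\sum_j c_j\,\partial_{x_j}f+w$ with $w\in(f)\mathcal O_n^p$, the coefficient $c_j$ must have weighted order $\nu+a_j>0$ and hence lies in $\mathfrak m_n$ (as $a_j>0$). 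Thus in fact $g\in T\mathcal K f$, the \emph{non-extended} contact tangent space, which is what integrates to a diffeomorphism fixing the origin.

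To promote this from $s=0$ to all $s$, I would use a leading-form (filtered Nakayama) argument. Because $\nu_{(\underline d,\underline a)}(g)>0$, the weighted leading form of each generator $\partial_{x_j}F_s=\partial_{x_j}f+s\,\partial_{x_j}g$ and of $F_s=f+sg$ coincides with that of the corresponding already-homogeneous generator of $T\mathcal K f$. Consequently the initial module of $T\mathcal K F_s$ contains the homogeneous module $T\mathcal K f$, and in particular contains $G^{(\underline d,\underline a)}_{>\alpha}$. A successive-approximation scheme—replace an element of $G^{(\underline d,\underline a)}_{>\alpha}$ by its leading form, subtract from it a representative in $T\mathcal K F_s$ of strictly larger valuation, and iterate—then gives $G^{(\underline d,\underline a)}_{>\alpha}\subseteq T\mathcal K F_s$, the scheme converging because the filtration is separated and $T\mathcal K F_s$ is closed. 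Carrying this out with $s$ as an analytic parameter (working over $\mathcal O_n\{s\}$) produces the relative membership $g\in T\mathcal K F_s$ with analytic dependence on $s$.

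Finally I would integrate: the relative infinitesimal membership furnishes, via the Thom--Levine homotopy method (integration of the associated $s$-dependent vector field together with the accompanying $GL_p(\mathcal O_n)$-valued matrix), a family of analytic coordinate changes and unit matrices trivializing $\{F_s\}$, whence $f=F_0$ and $f+g=F_1$ are contact equivalent and the germs $(f^{-1}(\underline 0),\underline 0)$, $((f+g)^{-1}(\underline 0),\underline 0)$ are analytically equivalent. I expect the main obstacle to be exactly this last, analytic, step: one must ensure that the formal successive-approximation solution of the infinitesimal equation is realized by genuinely \emph{convergent} data depending analytically on $s$, and that the resulting time-dependent vector field integrates to honest germs of diffeomorphisms. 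This is where completeness of the weighted filtration, finite generation of the tangent modules (so that the iteration converges inside the convergent power-series ring), and, if necessary, an Artin-type approximation are needed to convert the clean graded tangent-space computation into an actual analytic equivalence of germs.
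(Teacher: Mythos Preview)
The paper does not prove this proposition: it is quoted verbatim as a result of Merle from \cite[Proposition~1]{giusti} and is used as a black box in the proof of Lemma~\ref{lemma: analytic-type}. There is therefore no ``paper's own proof'' to compare against.

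That said, your outline is the standard contact-equivalence/homotopy argument for weighted finite determinacy of isolated complete intersection singularities, and it is essentially correct. The identification of analytic equivalence of ICIS germs with $\mathcal K$-equivalence of the defining maps, the use of the grading on $T^1(f)$ to conclude $G^{(\underline d,\underline a)}_{>\alpha}\subseteq T\mathcal K_e f$, the observation that $\nu_{(\underline d,\underline a)}(g)>0$ forces the Jacobian coefficients into $\mathfrak m_n$ (so one lands in $T\mathcal K f$ rather than only $T\mathcal K_e f$), and the leading-form stability argument for $F_s$ are all the right ingredients. You are also right to flag convergence as the delicate point: the successive-approximation scheme is a priori formal, and one needs either an Artin approximation step or, more directly, a filtered version of the preparation theorem/Nakayama over $\mathcal O_n\{s\}$ to obtain honest analytic coefficients. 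In the quasi-homogeneous setting this can be handled cleanly because the weighted filtration is induced by a genuine $\mathbb C^*$-action, so the graded pieces are finite-dimensional and the iteration terminates modulo each weight; this is the route taken in \cite{Pi} and \cite{giusti}. Your sketch would benefit from stating this finiteness explicitly rather than appealing to closure of $T\mathcal K F_s$ in the filtration, which by itself does not guarantee convergence of the series you build.
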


\begin{proof}[Proof of Lemma \ref{lemma: analytic-type}]
Let $[C^\prime]\in Q_p$ be a point associated to a curve $C^\prime=C_A^\prime\cup C_B^\prime,$  where
 $C_A^\prime\subset A$ 
and $C_B^\prime\subset B$ have a node at $p\in E$ with one branch tangent to $E=A\cap B.$ Let
$(x,y,z,t)$ be analytic coordinates of $\mathcal X$ at $p=\underline 0$ in such a way that $A:x=t=0$ and $B:y=t=0.$ 
Then the analytic equations of $C^\prime\subset\mathcal X$ at $p$ are given by 
\begin{eqnarray}\label{eq: general-equation}
\left\{\begin{array}{l}
p(x,y,z)=0,\\
xy=0, \\
t=0,
\end{array}\right.
\end{eqnarray}
where $p(x,y,z)=0$ is the equation of an analytic surface $S$ in $\mathbb A^3$ having 
a singularity of multiplicity $2$ at $(0,0,0).$ Moreover, the tangent cone of $S$ at $p$ must contain the
line $\,x=y=0.$ Thus, up to a linear transformation, we may assume that the 
analytic equations of $C^\prime\subset\mathcal X$ at $p$ are given by
$$
p(x,y,z)=xz+yz+ax^2+by^2+z^3+p_3(x,y,z)=0,\,\,xy=0,\,\,t=0,
$$
where $p_3(x,y,z)$ is a polynomial of degree at least $3$ with no $z^3$-term. 
We want to prove that the complete intersection curve singularity given by \eqref{eq: general-equation}
is analytically equivalent to the curve singularity given by the equations \eqref{eq: quadruple-point}.
We first observe
that the element $f=(xz+yz+z^3,xy)\in I_{3,2}$ is quasi-homogeneous of type
$(3,4;\,2,2,1).$ Moreover, every term of the polynomial $p(x,y,z)-xz-yz-z^3=ax^2+by^2+p_3(x,y,z)$ 
has degree strictly greater than $3,$ if the variables $(x,y,z)$ have weights $(2,2,1).$ In order to apply
Proposition \ref{merle}, we need to compute $T^1(f).$ Let $C$ be a reduced Cartier divisor on $\mathcal X_0=A\cup B$ 
having a singularity
of analytical equations given by \eqref{eq: quadruple-point} at $p$. Then, since $C$ is a local complete intersection subvariety
of $\mathcal X,$ we have the following standard exact sequence of sheaves on $C$
 \begin{equation}\label{standardsequence}
\xymatrix{
0  \ar[r] &      \Theta_C  \ar[r] &      \Theta_{\mathcal X}|_C \ar[r]^{\alpha} & \mathcal N_{C|\mathcal X} \ar[r] &   T^1_C \ar[r] &     0, }
\end{equation}
where $\Theta_C\simeq Hom(\Omega_C^1,\mathcal O_C)$ is the tangent sheaf of $C,$ defined 
as the dual of the sheaf of differentials of $C,$ $\Theta_{\mathcal X}|_C$ is the tangent sheaf of 
$\mathcal X$ restricted to $C,$ $\mathcal N_{C|\mathcal X}$ is the normal 
bundle of $C$ in $\mathcal X$
and  $T^1_C$ is the first cotangent sheaf of $C$, which is supported at the singular 
locus $\Sing(C)$ of $C$ and whose stalk $T^1_{C,q}$ at every singular point
$q$ of $C$  is the versal deformation space of the singularity.
We also recall that the global sections of the image sheaf  $\mathcal N_{C|\mathcal X}^\prime$ of $\alpha$ 
are, by the versality properties of $T^1_C,$ the infinitesimal deformations of $C$ in $\mathcal X$ preserving
singularities of $C$ and their analytic type. For this reason, $\mathcal N_{C|\mathcal X}^\prime$ is called
the {\em equisingular deformation sheaf of $C$ in $\mathcal X.$}
Now observe that $T^1(f)=T^1_{C,p}.$
In order to compute $T^1_{C,p}$ we use
the following standard identifications:
 \begin{itemize}
 \item of the local ring $\mathcal O_{C,\,p}=\mathcal O_{\mathcal X,p}/\mathcal I_{C|\mathcal X,p}$ of $C$ at $p$ 
 with $\mathbb C[x,y,z]/(f_1,f_2)$, where $f_1(x,y,z)=xz+yz+z^3$ 
 and $f_2(x,y,z)=xy,$ \\
 \item of the $\mathcal O_{C,p}$-module $N_{C|\mathcal X,\,p}$ with the free $\mathcal O_{\mathcal X,\,p}$-module 
 $Hom_{\mathcal O_{\mathcal X,\,p}}(\mathcal I_{C|\mathcal X ,\,p}, \mathcal O_{C,p}),$ generated by
 morphisms $f_1^*$ and $f_2^*$,  defined by 
 $$f_i^*(s_1(x,y,z)f_1(x,y,z)+s_2(x,y,z)f_2(x,y,z))=s_i(x,y,z),\,\mbox{for}\, i=1,\,2$$
 \end{itemize}
 and, finally,
 \begin{itemize}
 \item of the $\mathcal O_{C,p}$-module
 \begin{eqnarray*}
 (\Theta_{\mathcal X}|_C)_{\, p}&\simeq &\Theta_{\mathcal X,p}/(I_{C,p}\otimes\Theta_{\mathcal X,p})\\
& \simeq & \langle \partial /{\partial x},\partial /{\partial y},\partial / {\partial z},\partial /{\partial t} \rangle
 _{\mathcal O_{C,\,p}}/
 \langle {\partial}/{\partial t}-x\partial /{\partial y}-y\partial /{\partial x}\rangle
 \end{eqnarray*}
 with the free $\mathcal O_{\mathcal X,\,p}$-module generated by the derivatives 
 $\partial /{\partial x},\partial / {\partial y},\partial /{\partial z}.$ 
 \end{itemize}
With these identifications, 
the localization map $\alpha_p:(\Theta_{\mathcal X}|_C)_{\, p}\rightarrow \mathcal N_{C|\mathcal X,\,p}$
at $p$ 
of the sheaf map $\alpha$ in \eqref{standardsequence}  is defined by
 \begin{eqnarray*}
\alpha_p(\partial /{\partial x})&=&\,\,\,\,\,\,\Big(s=s_1f_1+s_2f_2\rightarrowtail 
\partial s/\partial x=_{\mathcal O_{C,p}}
s_1\partial f_1 /{\partial x}
+s_2\partial f_2 /{\partial x}\Big)
\end{eqnarray*}
and, similarly, for $\alpha_p(\partial /{\partial y})$ and $\alpha_p(\partial /{\partial z}).$
In particular, we have that
 \begin{eqnarray}\label{components}
 \begin{array}{cccc}
\alpha_p(\partial /{\partial x})&=& zf_1^*(s)\,\,\,+\,\,yf_2^*(s),&\\
\alpha_p(\partial /{\partial y})&=& zf^*_1(s)\,\,\,+\,\,xf^*_2(s)\,&\textrm{and}\\
\alpha_p(\partial /{\partial z})&=& (x+y+3z^{2})f^*_1(s).&
\end{array}
\end{eqnarray}
It follows that the versal deformation space of the non planar quadruple point of $C$ at $p$ is the 
affine space
\begin{equation}
T^1_{C,p}\simeq\mathcal O_{C,p}f_1^*\oplus \mathcal O_{C,p}f_2^*/_{\langle zf_1^*+yf_2^*,
zf_1^*+xf_2^*,(3z^2+x+y)f_2^*\rangle}.
\end{equation}
In particular, $T^1_{C,p}=T^1(f)\simeq\mathbb C^7$
(in accordance with \cite[Proposition on p. 165]{giusti1}) 
and, if we fix affine coordinates 
$(b_1,\,b_2,\,b_3,\,a_1,\,a_2,\,a_3,\,a_4)$ on $T^1_{C,p},$ then
the versal deformation family $\mathcal C_p\to T^1_{C,p}=T^1(f)$ has equations
\begin{eqnarray}\label{versal_family}
\left\{\begin{array}{l}
(y+x-z^2)z+a_1+a_2x+a_3y+a_4z=F(x,y,z)=0,\\
xy+b_1+b_2z+b_3z^2=G(x,y,z)=0. \\
\end{array}\right.
\end{eqnarray}
Furthermore, by the equality
$$
\begin{pmatrix}F(x,y,z)\\ G(x,y,z)
\end{pmatrix}=\begin{pmatrix}0\\b_1
\end{pmatrix}+\begin{pmatrix}a_1\\ b_2z
\end{pmatrix}+
\begin{pmatrix}a_4z\\b_3z^2
\end{pmatrix}+\begin{pmatrix}a_3y+a_2x\\ 0
\end{pmatrix}+\begin{pmatrix}(y+x-z^2)z\\ xy
\end{pmatrix},
$$ we deduce that the graduation on $T^1(f)$
 induced by $\Gamma^{(3,4;\,2,2,1)}$ is given by
 $$
 T^1(f)=\oplus_{\nu=-4}^0 T^1(f)_\nu.
 $$ 
In particular, we obtain that $\alpha=\sup_{\nu\in\mathbb Z}\{\nu | T^1(f)_\nu\neq 0\}=0.$
 By Proposition \ref{merle}, using that $\nu_{3,4;\,2,2,1}(p(x,y,z)-xz-yz-z^3, xy)>3>0=\alpha$
 we get that the singularity defined by \eqref{eq: general-equation} is analytically 
 equivalent to the singularity of equations \eqref{eq: quadruple-point}, as
 desired. 
\end{proof}

\begin{remark}[\bf{Equisingular infinitesimal  deformations}]\label{eq-eq-in}
 Let $C\subset\mathcal X_0$ be a curve as in Lemma \ref{lemma: analytic-type}.
 By the proof of the lemma, we may deduce the equations at $p$ of an equisingular
 infinitesimal deformation of $C$ in $\mathcal X.$ Indeed, by 
  the equalities \eqref{components}, we have that, if $s\in \mathcal N_{C|\mathcal X,p}^\prime,$ 
then the equations of $s$ at $p$ are given by 
\begin{eqnarray}\label{equisingulardeformationquad}
\left\{\begin{array}{l}
z(x+y+z^2)+\epsilon (zu_x+zu_y+(3z^{2}+x+y)u_z)=0,\\
xy+\epsilon (xu_y+yu_x) =0,
\end{array}\right.
\end{eqnarray}
where $$u=u_x(x,y,z)\partial /{\partial x}+u_y(x,y,z)\partial /{\partial y}+
u_z(x,y,z)\partial /{\partial z}\in {\Theta_{\mathcal X}|_D}_{\, p}.$$
Now $xy+\epsilon (xu_y+yu_x)=0$ is the equation of an infinitesimal deformation 
$\mathcal X_0$ vanishing along the singular locus $E.$ By  \cite[Section 2]{chm}, we know that these 
infinitesimal deformations are the infinitesimal deformations of $\mathcal X_0$ preserving the
singular locus. Since $\mathcal X_0$ is the only singular fibre
of $\mathcal X$, we have that  $xu_y+yu_x=0$ in \eqref{equisingulardeformationquad}.
Since this does not depend on the type of singularity of $C$ at $p,$ we deduce that
$H^0(C,\mathcal N_{C|\mathcal X}^\prime)=H^0(C,\mathcal N^\prime_{C|\mathcal X_0}).$ Moreover,
we obtain that $u_x$ and $u_y$ are polynomials with no $z^r$-terms, for every $r,$
and such that $u_y(\underline 0)=u_x(\underline 0)=0$ and
no $x^r$-terms (resp. $y^r$-terms) appear in $u_y$ (resp. $u_x$).  
Thus the equations of $s\in \mathcal N_{C|\mathcal X,p}^\prime$ at $p$ are given by
\begin{eqnarray}\label{equisingulardeformationquad2}
\left\{\begin{array}{l}
z(x+y+z^2)+\epsilon (zxu^\prime_x+zyu^\prime_y+(3z^{2}+x+y)(c+u^\prime_z))=0,\\
xy =0,
\end{array}\right.
\end{eqnarray}
where we set $u_x(x,z)=xu^\prime_x(x,z),$ $u_y(y,z)=yu^\prime_y(y,z)$ and
$u_z(x,y,z)=c+u^\prime_z(x,y,z),$ with $u^\prime_z(\underline 0)=0.$
Finally, $H^0(C,\mathcal N^\prime_{C|\mathcal X})=H^0(C,\mathcal N^\prime_{C|\mathcal X_0})$ is
a linear space of codimension $\leq 4$ in $|D_0|,$ contained in the linear system
$T_p$ of curves $C^\prime_A\cup C^\prime_B\subset\mathcal X_0,$ with $C^\prime_A
\subset A,$ $C^\prime_B\subset B$ and $C_A^\prime$ and $C_B^\prime$ tangent to $E$ at $p.$
\end{remark}

We may now provide sufficient conditions for the existence 
of curves with a triple point and possibly further singularities in the linear system  $|D_t|.$

\begin{definition}
Let $\mathcal X$  be a family of surfaces as above and let $D\subset \mathcal X$ be a Cartier divisor.
Let $\mathcal W^{\mathcal X|\mathbb A^1}_{D,g,tr} \subset |\mathcal
O_{\mathcal X}(D)|\times (\mathbb A^1\setminus\{0\})$ be the locally closed subset defined as follows
\begin{eqnarray*}
\mathcal W^{\mathcal X|\mathbb A^1}_{D,g,tr} =\{([D^\prime],t)\mid\,\mathcal
X_t\,\textrm{is smooth,}\, D^\prime\cap \mathcal X_t:=D_t^\prime\in\,\mid\mathcal
O_{ \mathcal X_t}(D_t)\mid\, \textrm{is
irreducible}\\
\textrm{of genus}\,g,\,\textrm{with a triple point and nodes as singularities}.\}
\end{eqnarray*}
There is a naturally defined rational map $\pi:\mathcal W^{\mathcal X|\mathbb A^1}_{D,g,tr} \to\mathcal H^{\mathcal X|\mathbb A^1},$
where $\mathcal H^{\mathcal X|\mathbb A^1}$ is the relative Hilbert scheme of $\mathcal X\to\mathbb A^1.$
We will denote by $\mathcal V^{\mathcal X|\mathbb A^1}_{D,g,tr}$ the Zariski closure of the image of $\pi$ 
and we will name it the universal Severi variety
of curves of genus $g$ in $|D|$ with a triple point and nodes. The restriction of this variety 
$\mathcal V^{\mathcal X|\mathbb A^1}_{D,g,tr}\cap |D_t|=\mathcal V^{\mathcal X_t}_{D_t,g,tr},$
where $t\in\mathbb A^1$ is general, is the Severi variety of genus $g$ curves in $|D_t|$
with a triple point and nodes as singularities.
\end{definition}
We observe that, if $V\subset \mathcal V^{\mathcal X_t}_{D_t,g,tr} $ is an irreducible component, then 
 $V$ coincides with
the equisingular deformation locus $ES(C)\subset |D_t|$ of the curve $C$ corresponding to the general point
$[C]\in V,$ defined in Section \ref{notation-terminology}. 
\begin{theorem}\label{th: triple-point}
With the notation above, let $C=C_A\cup C_B\subset \mathcal X_0$ be a reduced divisor 
in the linear system $|D_0|$ such that $C_A\subset A$ and $C_B\subset B$ have a node at a general point $p$ of
$E=A\cap B$ with one branch tangent to $E.$ Suppose that $C_A$ and $C_B$ are smooth 
 and they intersect $E$ transversally outside $p.$
Assume, moreover, that:
\begin{itemize}
\item[1)] $h^1(A,\mathcal O_A)=h^1(B,\mathcal O_B)=h^1(\mathcal X_t, \mathcal O_{\mathcal X_t})=0,$ for every $t;$
\item[2)] $\dim(|D_t|)=\dim(|D_0|),$ for a general $t;$\label{liscezza_schema_hilbert} 
\item[3)] $h^0(C,\mathcal N^\prime_{C|\mathcal X})=\dim(|D_0|)-4.$\label{expected_dimension}
\end{itemize}
Then the universal Severi variety $\mathcal V^{\mathcal X|\mathbb A^1}_{D,p_a(D)-3,tr}
\subset\mathcal H^{\mathcal X|\mathbb A^1}$ is non-empty. More precisely, the point $[C]\in\mathcal H^{\mathcal X|\mathbb A^1}$
corresponding to the curve $C$ belongs to an irreducible component $\mathcal Q$
of the special fibre $\mathcal V_0$ of $\mathcal V^{\mathcal X|\mathbb A^1}_{D,p_a(D)-3,tr}\to\mathbb A^1,$ contained in $\mathcal V_0$
with multiplicity $2.$ Finally, $\mathcal V^{\mathcal X|\mathbb A^1}_{D,p_a(D)-3,tr}$ is smooth at $[C]$
and the irreducible component $V_t^{\mathcal Q}$ of the general fibre $\mathcal V_t$ of $\mathcal V,$
specializing to $\mathcal Q,$ has expected dimension in $|D_t|.$
\end{theorem}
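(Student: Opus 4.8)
The plan is to reduce every assertion to a local study of the versal deformation family \eqref{versal_family} of the non-planar quadruple point.

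\emph{Step 1 (relative Hilbert scheme and characteristic morphism).} First I would show, using hypotheses $1)$ and $2)$, that $\mathcal H:=\mathcal H^{\mathcal X|\mathbb A^1}$ is smooth at $[C]$ of dimension $N+1$, where $N:=\dim|D_0|$, and that the projection $\mathcal H\to\mathbb A^1$ is smooth there: by upper semicontinuity of $h^0$, flatness and Mayer--Vietoris on $\mathcal X_0=A\cup B$ (using the vanishing of the $h^1$'s in $1)$ together with $2)$) the direct image of $\mathcal O_{\mathcal X}(D)$ is locally free of rank $N+1$ on $\mathbb A^1$, so near $[C]$ the relative Hilbert scheme is the total space of the associated projective bundle. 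Globalizing the exact sequence \eqref{standardsequence} gives the characteristic (versality) morphism $\chi$ from the germ of $\mathcal H$ at $[C]$ to $T^1_C=T^1_{C,p}\cong\mathbb C^7$ (here $\Sing(C)=\{p\}$, so $T^1_C=T^1_{C,p}$); its differential at $[C]$ is the natural map $H^0(C,\mathcal N_{C|\mathcal X})\to T^1_{C,p}$ whose kernel is $H^0(C,\mathcal N^\prime_{C|\mathcal X})$, which by Remark \ref{eq-eq-in} equals $H^0(C,\mathcal N^\prime_{C|\mathcal X_0})$, of dimension $N-4$ by hypothesis $3)$. Since $\mathcal X$ is the \emph{fixed} family with local equation $xy=t$ at $p$ (Lemma \ref{lemma: analytic-type}), the deformed curves lie on the surfaces $\{xy=\mathrm{const}\}$, so the image of $\chi$ is contained in the $5$-dimensional slice $\Pi:=\{b_2=b_3=0\}\subset T^1_{C,p}$ with coordinates $(a_1,a_2,a_3,a_4,b_1)$; the composite $\mathcal H\xrightarrow{\,\chi\,}\Pi\to\mathbb A^1$, $b_1\mapsto -b_1=t$, is the structure morphism, and the rank count $(N+1)-(N-4)=5=\dim\Pi$ shows that $\chi$ is a submersion onto $\Pi$ at $[C]$.

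\emph{Step 2 (reduction to a curve in $\Pi$).} Being a submersion near $[C]$, $\chi$ is locally a trivial fibration with fibre $\cong\mathbb C^{N-4}$; $\chi^{-1}(0)$ is the equisingular locus $ES(C)\subset|D_0|$ (smooth of dimension $N-4$ at $[C]$), and for $s\in\Pi$ near $0$ a general member of $\chi^{-1}(s)$ is a curve in $|D_{-b_1(s)}|$ with the singularity prescribed by \eqref{versal_family} over $s$ near $p$ and no other singularity (because $\Sing(C)=\{p\}$), which is moreover irreducible — the branch of the would-be triple point obtained by gluing the two branches of the nodes of $C_A$ and $C_B$ tangent to $E$ forces $C_A$ and $C_B$ to merge on $\mathcal X_t$. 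Hence, near $[C]$, $\mathcal V:=\mathcal V^{\mathcal X|\mathbb A^1}_{D,p_a(D)-3,tr}$ coincides with $\chi^{-1}(\overline{\Sigma_{tr}})$, where $\Sigma_{tr}\subset\Pi$ is the locus of parameters $s$ with $b_1(s)\neq 0$ for which the curve $\{(x+y-z^2)z+a_1+a_2x+a_3y+a_4z=0,\ xy+b_1=0\}$ inside the smooth surface $\{xy=-b_1\}$ has an ordinary triple point.

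\emph{Step 3 (the local computation — the main obstacle).} The crux is to prove that $0\in\overline{\Sigma_{tr}}$ and that $\overline{\Sigma_{tr}}$ is a \emph{smooth curve} at the origin of $\Pi$, along which $b_1$ vanishes to order exactly $2$. I would do this by eliminating $y=-b_1/x$ and clearing denominators (or via an affine substitution adapted to the corner of the hyperbola $xy=-b_1$), then imposing at an unknown point $(x_0,z_0)$ the vanishing of the resulting defining function and of all its first and second partial derivatives, with the three tangent lines there distinct: this gives six equations in the seven unknowns $(x_0,z_0,a_1,\ldots,a_4,b_1)$, and one checks that their common zero locus is smooth of dimension one at the solution where the triple point runs into $p=(0,0,0)$, and that $b_1$ has there a double zero. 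The double vanishing of $b_1=-t$ reflects the fact — established at the start of this section via \eqref{step1}--\eqref{step2} — that on $\mathcal X$ the triple point approaches $p$ along a \emph{bisection}, not a section; and the deformation exists at all because the two transverse branches of the nodes of $C_A$ and $C_B$, together with the single branch obtained by gluing their branches tangent to $E$, make up exactly three pairwise transverse branches on $\mathcal X_t$, whose flat limit as $t\to0$ is the quadruple point \eqref{eq: quadruple-point}. This explicit but delicate computation is the step I expect to be the main obstacle; the rest is formal.

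\emph{Step 4 (conclusion).} Granting Step 3: since $\chi$ is a submersion onto $\Pi$ and $\overline{\Sigma_{tr}}$ is a smooth curve through $0$, the variety $\mathcal V=\chi^{-1}(\overline{\Sigma_{tr}})$ is non-empty and smooth at $[C]$, of dimension $(N-4)+1=N-3$. Writing $\mathcal V\cong\mathbb C^{N-4}\times\overline{\Sigma_{tr}}$ near $[C]$, the function $t=-b_1$ depends only on the $\overline{\Sigma_{tr}}$-factor and vanishes to order $2$ at the origin there, so the scheme-theoretic special fibre $\mathcal V_0$ equals $\mathbb C^{N-4}\times\operatorname{Spec}\mathbb C[u]/(u^2)$ near $[C]$; its reduction is the irreducible component $\mathcal Q:=\chi^{-1}(0)=ES(C)$, which contains $[C]$ and appears in $\mathcal V_0$ with multiplicity $2$. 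Finally, for small $t\neq 0$ the fibre $V_t^{\mathcal Q}=\chi^{-1}(s)$ (with $-b_1(s)=t$, $s\in\Sigma_{tr}$) is smooth of dimension $N-4=\dim|D_t|-4$, the expected dimension of the Severi variety of curves with one ordinary triple point in $|D_t|$, and it specializes to $\mathcal Q$ as $s\to 0$ along $\overline{\Sigma_{tr}}$. This establishes all the assertions.
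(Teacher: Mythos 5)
Your architecture is the paper's: smoothness of $\mathcal H^{\mathcal X|\mathbb A^1}$ at $[C]$ from hypotheses 1) and 2) via \cite[Proposition 4.4.7]{ser}, the versal map to $T^1_{C,p}\cong\mathbb C^7$, the identification of its kernel with $H^0(C,\mathcal N^\prime_{C|\mathcal X})=H^0(C,\mathcal N^\prime_{C|\mathcal X_0})$ of dimension $\dim(|D_0|)-4$ using hypothesis 3) and Remark \ref{eq-eq-in}, a five-dimensional image described by $b_2=b_3=0$, and finally the claim that the triple-point parameters form a smooth curve along which $b_1$ (i.e.\ $t$) vanishes to order exactly two, from which non-emptiness, smoothness of $\mathcal V$ at $[C]$, the multiplicity $2$ of $\mathcal Q$ in $\mathcal V_0$ and the expected dimension of $V_t^{\mathcal Q}$ all follow. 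But the step you defer in Step 3 as ``the main obstacle'' is precisely the heart of the paper's proof, and without it the non-emptiness and the tangency/multiplicity-two statements --- hence the theorem --- remain unproved. The paper carries it out explicitly: with $b_2=b_3=0$, $b_1\neq 0$, one eliminates a variable using the second equation of \eqref{versal_family}, clears denominators, and imposes the vanishing of the resulting $F(y,z)$ and of all its first and second partials at an unknown point; the system forces $z_0=0$, $a_1=a_2=a_3=0$, $y_0=-a_4/2$, $a_4^2=4b_1$, and conversely such parameters give a curve with an ordinary triple point (tangent cone $z((y+a_4/2)^2+z^2)$) and no further singularities, so the closure of the triple-point locus is the smooth curve $T: a_1=a_2=a_3=b_2=b_3=0,\ a_4^2=4b_1$, tangent to $\Gamma: b_1=b_2=b_3=0$ at $\underline 0$. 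This confirms the order-two vanishing you predicted, but in your write-up it is predicted, not proved.

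Two auxiliary assertions are also not right as stated. First, $\Sing(C)=\{p\}$ is false in general: $C$ is a Cartier divisor on $\mathcal X_0$, so $C_A$ and $C_B$ cut out the same divisor on $E$, and their transversal intersections with $E$ outside $p$ are nodes of $C$; thus $T^1_C\neq T^1_{C,p}$, and your ``no other singularity'' in Step 2 needs the paper's extra observation that the nodes of $C$ on $E$ are automatically preserved by deformations inside $\mathcal X_0$ and necessarily smoothed on $\mathcal X_t$ (this is also what gives $\ker d\phi_p=H^0(C,\mathcal N^\prime_{C|\mathcal X})$ and the genus $p_a(D)-3$). Second, ``the deformed curves lie on $xy=\mathrm{const}$, hence the image of $\chi$ lies in $b_2=b_3=0$'' is not a proof: the versal map is defined only up to non-canonical isomorphism of germs, so literal containment is not automatic. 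The paper instead proves the tangent-space statement $T_{\underline 0}\phi_p(U_p): b_2=b_3=0$ by combining the kernel computation with the image of the extra section $\sigma$ of equations $z(x+y+z^2)=0$, $xy=\epsilon$, and then reduces the triple-point analysis to this tangent plane using smoothness of the image. Both points are repairable along the paper's lines, but as written the crucial local computation is missing and Steps 1--2 rest on inaccurate claims.
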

\begin{proof}
We want to obtain curves in the linear system $|\mathcal O_{\mathcal X_t}(D_t)|$ with a triple point 
as deformations of $C\subset\mathcal X_0.$
The scheme parametrizing deformations of $C$ in $\mathcal X$ is an irreducible component 
$\mathcal H$ of the relative Hilbert scheme $\mathcal H^{\mathcal X|\mathbb A^1}$ of the family $\mathcal X.$ 
In particular, by \cite[Proposition 4.4.7]{ser} and the hypotheses $1)$ and $2)$, we have that
$\mathcal H$ is smooth at the point $[C]\in\mathcal H$
corresponding to $C.$  Now, by hypothesis, $C$ has a non-planar quadruple point at $p$,
nodes on $E\setminus \{p\}$ and no further singularities. Moreover,
 it is well-known that, no matter how we deform
$C$ to a curve on $\mathcal X_t,$ the nodes of $C$ on $E$ are smoothed (see, for example, \cite[Section 2]{galati}).
We want to prove that $C$ may be deformed to a curve on $\mathcal X_t$ in such a way that
the non-planar quadruple point of $C$ at $p$ is deformed to a 
triple point.  This will follows from a local analysis.
First recall that, by Lemma \ref{lemma: analytic-type}, we may choose analytic coordinates
$(x,y,z,t)$ of $\mathcal X$ at $p$ in such a way that the equations of $C$ at $p$ are given by 
\eqref{eq: quadruple-point} and the versal deformation family $\mathcal C_p\to T^1_{C,p}$
has equations given by \eqref{versal_family}, where $(b_1,\,b_2,\,b_3,\,a_1,\,a_2,\,a_3,\,a_4)$ 
are the affine coordinates on $T^1_{C,p}\simeq\mathbb C^7.$ By versality,
denoting by $\mathcal D\to\mathcal H$ the universal family parametrized by $\mathcal H,$
there exist \'etale neighborhoods
$U_p$ of $[C]$ in $\mathcal H,$ $U_p^\prime$ of $p$ in $\mathcal D$ and  $V_p$ of $\underline 0$ in $T^1_{C,q}$
 and a map $\phi_p:U_p\to V_p$ 
so that the family
$\mathcal D|_{U_p}\cap U_p^\prime$ is isomorphic to the pull-back of $\mathcal C_p|_{V_p}$, with respect to $\phi_p$,
\begin{equation}\label{versalmap}
\xymatrix{
\mathcal C_p  \ar[d] & \mathcal C_p |_{V_p} \ar[d]  \ar[l]& 
U_p\times_{V_p}\mathcal C_p |_{V_p}  \ar[l] \ar[r]^{\hspace{0.5cm}\simeq} \ar[dr] & \mathcal D|_{U_p}\cap U_p^\prime
\ar[r] \ar[d] & \mathcal D  \ar[d] \\
T^1_{C,p} &  V_p \ar[l] & & \ar[ll]_{\phi_p} U_p \ar[r] & \mathcal H.}
\end{equation}  
We need to describe the image $\phi_p(U_p)\subset V_p.$ First we want to prove that
\begin{equation}\label{special-map}
\phi_p(U_p\cap |D_0|)=\Gamma\cap V_p,
\end{equation} 
where $$\Gamma:  b_1= b_2 = b_3=0.$$
Obviously, $\phi_p(U_p\cap |D_0|)\subset\Gamma\cap V_p.$ To prove that
the equality holds, it is enough to show that the differential 
${d\phi_p}_{[C]}:H^0(C,\mathcal N_{C|\mathcal X_0})\to T_{\underline 0}\Gamma$ is surjective. 
By standard deformation theory, by identifying the versal deformation space
of a singularity with its tangent space at $\underline 0,$ the differential 
of $\phi_p$  at $[C]$ can be identified with the map
 $$
{ d\phi_p}_{[C]}:H^0(C,\mathcal N_{C|S})\to H^0(C, T^1_C)\to T^1_{C,p},
 $$
 induced by the exact sequence \eqref{standardsequence}.
In particular, using that, by  Remark \ref{eq-eq-in}, $H^0(C,\mathcal N^\prime_{C|\mathcal X})=
H^0(C,\mathcal N^\prime_{C|\mathcal X_0})$ and that the nodes of $C$ on $E$ are 
necessarily preserved when we deform $C$ on $\mathcal X_0,$
we have that $$\ker{d\phi_p}_{[C]}=H^0(C,\mathcal N^\prime_{C|\mathcal X}).$$
Now, again by Remark \ref{eq-eq-in}, we know that $h^0(C,\mathcal N^\prime_{C|\mathcal X})=
h^0(C,\mathcal N^\prime_{C|\mathcal X_0})\geq \dim(|D_0|)-4=\dim(|D_0|)-\dim(\Gamma).$ 
Actually, by the hypothesis $3),$
we have that $h^0(C,\mathcal N^\prime_{C|\mathcal X_0})$ has the expected dimension
and hence the equality \eqref{special-map} is verified.
Using that $H^0(C,\mathcal N_{C|\mathcal X_0})$
is a hyperplane in $H^0(C,\mathcal N_{C|\mathcal X})$ and that $\phi_p(U_p\cap |D_0|)
\subset\phi_p(U_p),$ this implies, in particular, that $\phi_p(U_p)\subset V_p\subset T^1_{C,p}$ is a subvariety
of dimension $\dim(\phi_p(U_p\cap |D_0|))+1=5,$ smooth at $\underline 0.$ We want to determine 
the equations of tangent space $T_{\underline 0}\phi_p(U_p)=
d\phi_p(H^0(C,\mathcal N_{C|\mathcal X})).$ For this purpose, it is enough to find 
 the image by
$d\phi_p$ of the infinitesimal deformation $\sigma\in H^0(C,\mathcal N_{C|\mathcal X})\setminus
H^0(C,\mathcal N_{C|\mathcal X_0})$ having equations
\begin{eqnarray}
\left\{\begin{array}{l}
z(x+y+z^2)=0\\
xy=\epsilon.
\end{array}\right.
\end{eqnarray}  
The image of $\sigma$ is trivially the vector $(1,0,...,0).$
We deduce that 
$$
T_{\underline 0}\phi_p(U_p)=
d\phi_p(H^0(C,\mathcal N_{C|\mathcal X})):\,b_2=b_3=0.
$$

Now we want to prove that the locus in $\phi_p(U_p)\setminus \phi_p(U_p\cap |D_0|)$ of points
 corresponding to curves with an ordinary triple point as singularity is not
empty and its Zariski closure is a smooth curve $ T$ tangent to 
$\Gamma=\phi_p(U_p\cap |D_0|)$ at $\underline 0.$
This will imply the theorem by versality and by a straightforward dimension count. 
Because of smoothness of $\phi_p(U_p)$ at $\underline 0,$ it is enough to prove that the locus
of points $(b_1,0,0,a_1,...,a_4)\in T_{\underline 0}\phi_p(U_p)$ with $b_1\neq 0$ and  corresponding
to a curve with an ordinary triple point is not empty  and its Zariski closure is a smooth curve $ T$ tangent to 
$\Gamma=\phi_p(U_p\cap |D_0|)$ at $\underline 0.$

Let $(b_1,0,0,a_1,...,a_4)\in T^1_{C,p}$ be a point with $b_1\neq 0.$ Then the fibre $\mathcal C_{(b_1,0,0,a_1,...,a_4)}$
of the versal family $\mathcal C_p\to T^1_{C,p}$ has equation
$$
\mathcal C_{(b_1,0,0,a_1,...,a_4)}: z(z^2+\frac{b_1}{y}+y)+a_1+a_2\frac{b_1}{y}+a_3y+a_4z=0, \,\,y\neq 0
$$
or, equivalently,
$$
\mathcal C_{(b_1,0,0,a_1,...,a_4)}: F(y,z)=z^3y+zy^2+b_1z+a_1y+a_2b_1+a_3y^2+a_4zy, \,\,y\neq 0.
$$
Now a point $(y_0,z_0)\in\mathcal C_{(b_1,0,0,a_1,...,a_4)}$
is a singular point of multiplicity at least three if and only if
\begin{eqnarray}
\frac{\partial F(y,z)}{\partial y}&=& z_0^3+2y_0z_0+a_1+2a_3y_0+a_4z_0=0,\label{1}\\
\frac{\partial F(y,z)}{\partial z}&=& 3z_0^2y_0+y^2_0+b_1+a_4y_0=0,\label{2}\\
\frac{\partial F(y,z)}{\partial z^2}&=& 6z_0y_0=0,\label{3}\\
\frac{\partial F(y,z)}{\partial y^2}&=& 2z_0+2a_3=0,\label{4}\\
\frac{\partial F(y,z)}{\partial y\partial z}&=& 3z_0^2+2y_0+a_4=0.\label{5}
\end{eqnarray}
By the hypothesis $b_1\neq 0$ and the equalities \eqref{3} and \eqref{4}, we find that $z_0=0.$
By substituting $z_0=0$ in the equalities \eqref{1}, \eqref{2}, \eqref{4}, \eqref{5} and $F(y_0,0)=0,$
we find that  
\begin{eqnarray*}
a_1=a_2=a_3=0,\\
y^2_0+b_1+a_4y_0=0,\\
z_0=0,\\
2y_0+a_4=0.
\end{eqnarray*}
Conversely, for every point $(b_1,0,0,a_1,...,a_4)\in T^1_{C,p}$ such that $b_1\neq 0,$
$a_1=a_2=a_3=0$ and $a_4^2=4b_1,$ we have that the corresponding curve has 
a triple point at $(-\frac{a_4}{2},0),$ with tangent cone of equation 
$z((y+\frac{a_4}{2})^2+z^2)$ and no further singularities. The curve  
$$
T: a_1=a_2=a_3=b_2=b_3=0, a_4^2=4b_1
$$
is smooth and tangent to  $\Gamma: b_1=b_2=b_3=0$ at $\underline 0.$
\end{proof}
The following corollary is a straight consequence of the proof of Theorem \ref{th: triple-point}.

\begin{corollary}\label{ingenerale}
Let $\mathcal X$  be a family of regular surfaces and $D\subset \mathcal X$ a Cartier divisor
as in the statement of Theorem \ref{th: triple-point}. 
Let $ C^\prime\subset |D_0|$ be any reduced curve with a space quadruple 
point of equations \eqref{eq: quadruple-point} at a
point $p\in E$ and possibly further singularities. Then, using the same notation as in the proof
of the previous theorem, the image ${H_p}^\prime$ of the morphism
\begin{equation*}
\xymatrix{
H^0(C^\prime,\mathcal N_{C^\prime|\mathcal X})\ar[r]&T^1_{C^\prime,p}}
\end{equation*}
is contained in the $5$-dimensional plane of equations $H_p:b_2=b_3=0.$ If ${H_p}^\prime=H_p$
then there exist deformations $C_t\in |D_t|$ of $C$ on $\mathcal X_t$ with a triple point, obtained 
as deformation of the singularity of $C$ at $p.$ 
\end{corollary}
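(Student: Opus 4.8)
The plan is to re-run the local analysis of the proof of Theorem \ref{th: triple-point}, but keeping track only of what is forced by versality and the structure of the space quadruple point at $p$, without using the global numerical hypothesis $h^0(C,\mathcal N^\prime_{C|\mathcal X})=\dim(|D_0|)-4$. First I would observe that, since $C^\prime$ has the singularity \eqref{eq: quadruple-point} at $p\in E$, Lemma \ref{lemma: analytic-type} and the computation in its proof give a versal family $\mathcal C_p\to T^1_{C^\prime,p}\simeq\mathbb C^7$ with equations \eqref{versal_family} and coordinates $(b_1,b_2,b_3,a_1,a_2,a_3,a_4)$, and a versal map $\phi_p:U_p\to V_p$ as in \eqref{versalmap}, whose differential at $[C^\prime]$ is the composite $H^0(C^\prime,\mathcal N_{C^\prime|\mathcal X})\to H^0(C^\prime,T^1_{C^\prime})\to T^1_{C^\prime,p}$ induced by \eqref{standardsequence}. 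So $H_p^\prime=d\phi_p(H^0(C^\prime,\mathcal N_{C^\prime|\mathcal X}))=T_{\underline 0}\phi_p(U_p)$ up to identifying the versal base with its tangent space.

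The key step is the inclusion $H_p^\prime\subseteq H_p:\{b_2=b_3=0\}$. This follows exactly as in the proof of Theorem \ref{th: triple-point} and Remark \ref{eq-eq-in}: the second equation of any infinitesimal deformation of $C^\prime$ inside $\mathcal X$ has the form $xy+\epsilon(xu_y+yu_x)=0$ coming from the restriction of $\Theta_{\mathcal X}|_{C^\prime}$, the coefficient $c+u_z$ of $\partial/\partial t$ being eliminated via the relation $\partial/\partial t-x\partial/\partial y-y\partial/\partial x$; in the versal coordinates this deformation of $xy$ contributes only to $b_1$ (the constant $xu_y+yu_x$ evaluated suitably), never to $b_2z$ or $b_3z^2$, since $u_x$ has no $x^r$-terms and $u_y$ has no $y^r$-terms and neither has $z^r$-terms, exactly as recorded in Remark \ref{eq-eq-in}. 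Hence every element of $H_p^\prime$ has $b_2=b_3=0$, i.e. $H_p^\prime\subseteq H_p$.

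For the second assertion, suppose $H_p^\prime=H_p$. Then, arguing as in Theorem \ref{th: triple-point}, $\phi_p(U_p)\subset V_p$ is a $5$-dimensional subvariety smooth at $\underline 0$ with tangent space $H_p:\{b_2=b_3=0\}$. In the proof of Theorem \ref{th: triple-point} it is shown, by the explicit computation with equations \eqref{1}--\eqref{5}, that inside the locus $\{b_2=b_3=0\}$ the curve $T:\{a_1=a_2=a_3=b_2=b_3=0,\ a_4^2=4b_1\}$ parametrizes deformations of \eqref{eq: quadruple-point} with a single ordinary triple point, and that $T$ is smooth and simply tangent to $\Gamma:\{b_1=b_2=b_3=0\}$ at $\underline 0$. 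Since $T\subset H_p=T_{\underline 0}\phi_p(U_p)$ and $T\not\subset\Gamma$, while $\Gamma\cap V_p=\phi_p(U_p\cap|D_0|)$, the curve $T$ meets $\phi_p(U_p)$ along a branch through $\underline 0$ not contained in the locus $|D_0|$; pulling this branch back through the versal map $\phi_p$ and the family $\mathcal D\to\mathcal H$ produces curves $C_t\in|D_t|$ on the smooth fibres $\mathcal X_t$ which, near $p$, are the corresponding fibres of $\mathcal C_p$ over $T$, hence have an ordinary triple point obtained as the deformation of the singularity of $C^\prime$ at $p$, with all other singularities of $C^\prime$ away from $E$ deformed independently (and the nodes on $E\setminus\{p\}$, if any, smoothed). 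The main obstacle is purely bookkeeping: making sure that the branch $T$ of the versal base genuinely lifts to $\mathcal H$, i.e. that $H_p^\prime=H_p$ forces $\phi_p(U_p)\supset T$ near $\underline 0$ — but this is immediate from smoothness of $\phi_p(U_p)$ at $\underline 0$ together with $T\subset H_p=T_{\underline 0}\phi_p(U_p)$ and the tangency of $T$ to $\Gamma$, exactly as in the final paragraph of the proof of Theorem \ref{th: triple-point}.
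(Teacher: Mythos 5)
Your overall strategy (re-run the local analysis of Theorem \ref{th: triple-point} without hypothesis 3), which is indeed what the paper intends by calling the corollary a straight consequence of that proof) is right, but the justification you give for the key containment $H_p^\prime\subseteq H_p$ is based on a false premise. You assert that \emph{every} infinitesimal deformation of $C^\prime$ in $\mathcal X$ has, at $p$, second equation $xy+\epsilon(xu_y+yu_x)=0$ ``coming from the restriction of $\Theta_{\mathcal X}|_{C^\prime}$''. But the sections of $\mathcal N_{C^\prime|\mathcal X}$ induced by $\Theta_{\mathcal X}|_{C^\prime}$ are exactly the sections of the equisingular sheaf $\mathcal N^\prime_{C^\prime|\mathcal X}=\mathrm{im}\,\alpha$, and these map to \emph{zero} in $T^1_{C^\prime,p}$, since $T^1$ is by definition the cokernel of $\alpha$ in \eqref{standardsequence}. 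If your description of local sections were correct, one would get $H_p^\prime=0$, and the hypothesis $H_p^\prime=H_p$ of the second assertion could never hold; moreover the constraints on $u_x,u_y$ you quote from Remark \ref{eq-eq-in} were derived there from the equisingularity condition $xu_y+yu_x=0$ and cannot be invoked for arbitrary sections of $\mathcal N_{C^\prime|\mathcal X}$. The correct mechanism is global-to-local: a section of $\mathcal N_{C^\prime|\mathcal X_0}$ does not deform the local equation $xy$ at all (the deformed curve stays on $\mathcal X_0$), so it maps into $\Gamma:b_1=b_2=b_3=0$; and the quotient $\mathcal N_{C^\prime|\mathcal X}/\mathcal N_{C^\prime|\mathcal X_0}\cong\mathcal N_{\mathcal X_0|\mathcal X}|_{C^\prime}\cong\mathcal O_{C^\prime}$ has only locally constant global sections, so an arbitrary section of $\mathcal N_{C^\prime|\mathcal X}$ deforms $xy$ at $p$ by a constant to first order ($xy=\lambda\epsilon$, coming from moving into the nearby fibres $xy=t$), which contributes only to the $b_1$-coordinate. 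That, and not a vector-field computation, is what forces $b_2=b_3=0$.

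For the second assertion there is a smaller but real borrowing problem: you claim, ``arguing as in Theorem \ref{th: triple-point}'', that $\phi_p(U_p)$ is a $5$-dimensional subvariety smooth at $\underline 0$ with tangent space $H_p$. In the theorem that statement is obtained from hypothesis 3), via $\phi_p(U_p\cap|D_0|)=\Gamma\cap V_p$ and the resulting dimension count; for an arbitrary $C^\prime$ you only know that the differential at $[C^\prime]$ has image $H_p$, which gives $\dim\phi_p(U_p)\geq 5$ and no smoothness. The fix is to replace $U_p$ by a smooth $5$-dimensional slice transverse to $\ker d\phi_p$, whose image is a smooth $5$-fold $W\subseteq\phi_p(U_p)$ with $T_{\underline 0}W=H_p$, and then run the intersection argument with the triple-point locus on $W$; note also that mere tangency of $W$ to $H_p$ does not by itself force $W$ to meet the curve $T\subset H_p$ outside $\underline 0$, so one should intersect $W$ with the triple-point locus of the full seven-parameter family \eqref{versal_family} (or argue by versality as in the proof of the main theorem, following \cite{ch}), rather than only with its trace $T$ on the slice $b_2=b_3=0$.
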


\section{Curves with a triple point and nodes on general $K3$ surfaces}
This section is devoted to the proof of Theorem \ref{main-theorem}.
We will prove the theorem by using the very classical  degeneration technique introduced
in \cite{clm}. Let $(S,H)$ be a general primitively polarized $K3$ surface of genus 
${\p}=p_a(H)$ in $\mathbb P^{\p}.$
We will degenerate $S$ to a union of two rational normal scrolls $R=R_1\cup R_2.$
On $R$ we will prove the existence of suitable curves $C\in |\mathcal O_{R}(nH)|$
with a space quadruple point given by equations \eqref{eq: quadruple-point}, tacnodes and nodes. Finally, we will
show that the curves $C$ deform to curves on $S$ with the desired singularities.

We first explain the degeneration argument, introducing notation. 
 Fix an integer $\p \geq 3$ and set $l:= \lfloor \frac{\p}{2} \rfloor$.
Let $E \subset \mathbb P^{\p}$ be a smooth elliptic normal curve
of degree $\p+1$. Consider two general line bundles 
$L_1, L_2 \in \Pic^2(E) $.
We denote by $R_1$ and $R_2$ the (unique) rational normal scrolls of degree $\p-1$ in
$\mathbb P^{p}$ defined by $L_1$ and $L_2$, respectively. Notice that $R_1 \cong R_2 
\cong \mathbb P^1 \times \mathbb P^1\cong \mathbb F_0$ if $\p$ is odd 
whereas $R_1 \cong R_2 \cong \mathbb{F}_1$ if $\p$ is even. Moreover, $R_1$ and  $R_2$ intersect 
transversally along the curve $ E$ which  
 is anticanonical in each $R_i$ (cf. \cite[Lemma 1]{clm}).
More precisely, for odd $\p$, where $R_1 \cong R_2 
\cong \mathbb P^1 \times \mathbb P^1$, we let $\sigma_i=\mathbb P^1 \times \{pt\}$ and $F_i=\{pt\} \times \mathbb P^1$ on $R_i$ be the generators of $\Pic R_i$, with $i=1,2$.
For even $\p$, where $R_1 \cong R_2 \cong \mathbb{F}_1$, we let $\sigma_i$ be the section of negative self-intersection and $F_i$ be the class of a fiber. Then the embedding of $R_i$ into $\mathbb P^{\p}$ is given by the line bundle 
$\sigma_i+lF_i,$ for $i=1,2$.
Let now $R:= R_1 \cup R_2$ and let $\mathcal U_{\p}$ be the component of the Hilbert scheme of $\mathbb P^{\p}$ containing $R$. Then we have that $\dim(\mathcal U_{\p})={\p}^2+2{\p}+19$ and, by  \cite[Theorems 1 and 2]{clm}, 
 the general point $[S]\in\mathcal U_{\p}$ represents a smooth, projective $K3$ surface $S$ of degree $2{\p}-2$ in $\mathbb P^{\p}$ such that $\Pic S \cong \mathbb Z[\mathcal O_S(1)]=\mathbb Z[H].$ 
We denote by $\mathcal{S} \to T$ a general deformation of $\mathcal S_0=R$ over a
one-dimensional disc $T$ contained in $\mathcal U_{\p}$. In particular, the general fiber
is a smooth projective  $K3$ surface $\mathcal S_t$ in $\mathbb P^{p}$ with $\Pic \mathcal S_t \cong \mathbb Z [\mathcal O _{\mathcal S_t}(1)]$.
Now $\mathcal{S}$ is smooth except for $16$ rational double points $\{\xi_1, \ldots, \xi_{16}\}$ lying on
$E.$ In particular, $\{\xi_1, \ldots, \xi_{16}\}$ are the zeroes  of the section of the first cotangent bundle
$T^1_R$ of $R,$ determined by the first order embedded deformation associated to $\mathcal S\to T,$
\cite[pp. 644-647]{clm}. By blowing-up $\mathcal S$ at these points and by contracting the corresponding 
exceptional components (all isomorphic to $\mathbb F_0$) on $R_2,$ we get a smooth family of surfaces $\mathcal X\to T,$ such that $\mathcal X_t\simeq\mathcal S_t$ and 
$\mathcal X_0=R_1\cup \tilde{R_2},$ where $\tilde{R_2}$ is the blowing-up of $R_2$ at the points  $\{\xi_1, \ldots, \xi_{16}\},$
with new exceptional curves $E_1, \ldots,\,E_{16}.$ We will name $\{\xi_1, \ldots, \xi_{16}\}$ {\it the special points of} $E.$

\begin{lemma}\label{limiti}
Let $R=R_1\cup R_2\subset\mathbb P^{\p}$ as above. Then, for every $n\geq 1$ if $\p\geq 5$ and 
$n\geq 2$ if $\p=3,4,$ there exists 
a one parameter family of curves
 $C=C_{1}\cup C_{2}\in |\mathcal O_{R}(nH)|$ such that:
 \begin{enumerate}
\item $C_i\subset R_i$ and $C_i=  C_i^1\cup...\cup C_i^{n-1}\cup D_i\cup L_i,$
 where $C_i^j,$ $D_i$ and $L_i$ are smooth rational curves with:
  \begin{itemize}
   \item $C_i^j\sim\sigma_i,$ $D_i\sim F_i,$ and $L_i\sim \sigma_i+(nl-1)F_i$ if $\p=2l+1$ is odd,  
   \item $C_i^j\sim\sigma_i+F_i,$  $D_i\sim F_i$ and $L_i\sim \sigma_i+(nl-n)F_i$ if $\p=2l$ 
   is even,
\end{itemize}
for every $1\leq j\leq n-1$ and $i=1,2;$\\
\item there exist distinct points $p,q,q_1,...,q_{2n}\in E,$ where $p$ is a general point  
and $q,q_1,...,q_{2n}$ are determined by the following relations:
\begin{itemize}
\item if $\p=2l+1$ is odd, $i=1,2$ and $1\leq j\leq n-1,$ then
\begin{eqnarray}
\,\,\,\,\,\,\,\,\,\,\,\,\,\,\,\,\,\,D_1\cap E\,\, (\mbox{resp.}\,\, D_2\cap E)& =&\left\{\begin{array}{l}
p+q_{2n}, \,\mbox{if}\,\, n \,\,\mbox{is odd}, \,(\mbox{resp. if}\,\, n\,\, \mbox{is even}),\\
p+q_{2n-1},\,\mbox{if}\,\, n\,\, \mbox{is even}, \,(\mbox{resp. if}\,\, n\,\, \mbox{is odd}),\\
\end{array}\right.\label{odd1}\\
C_1^j\cap E\,\,(\mbox{resp.}\,\, C_2^j\cap E)&=&\left\{\begin{array}{l}
q_{2j-1}+q_{2j+1},\,\mbox{if}\,\, j \,\,\mbox{is even}\,(\mbox{resp. if}\,\, j\,\, \mbox{is odd}),\\
q_{2j}+q_{2j+2},\,\mbox{if}\,\, j \,\,\mbox{is odd}\,(\mbox{resp. if}\,\, j\,\, \mbox{is even})\\
\end{array}\right. \label{odd2}\\
\mbox{and}\,\,\,\,\,\, L_i\cap E &=& (2nl-3)q+2p+q_i;
\end{eqnarray}
\item if $\p=2l$ is even, $i=1,2$ and $1\leq j\leq n-1,$ then
\begin{eqnarray}
D_1\cap E\,\, (\mbox{resp.}\,\, D_2\cap E)& =& p+q_{2n}\,\,(\mbox{resp.}\,\,  p+q_{2n-1}),\label{even1}\\
C_1^j\cap E &=& q_{2j-1}+2q_{2j},\,\,1\leq j\leq n-1,\label{even2}\\
C_2^j\cap E &=& 2q_{2j}+q_{2j+1},\,\,1\leq j\leq n-2,\label{even3}\\
C^{n-1}_2\cap E &=& 2q_{2n-2}+q_{2n}\,\,\,\,and\label{even4}\\
 L_1\cap E\,\,(\mbox{resp.}\,\, L_2\cap E ) &=& (2nl-2n-2)q+2p+q_{2n-1}\label{even5}\\
&& ( \mbox{resp.}\,\,  (2nl-2n-2)q+2p+x,\,\,\mbox{where}\nonumber\\
&&x=q_2\,\,\mbox{if}\,\,
n=1\,\,\mbox{and}\,\,x=q_1\,\,\mbox{if}\,\,n>1);\nonumber
\end{eqnarray}
\end{itemize}
\item the singularities of $C$ on $R\setminus E$ are nodes, $C$ has a quadruple point
analytically equivalent to \eqref{eq: quadruple-point} at $p\in E$ and tacnodes and nodes on $E\setminus p.$
In particular, $C$ has a $(2nl-3)$-tacnode at $q$ and nodes at $q_1,...,q_{2n},$  if $\p=2l+1;$ 
$C$ has a $(2nl-2n-2)$-tacnode at $q,$ a simple tacnode at $q_{2k}$ 
and nodes at $q_{2n},q_{2n-1}$ and $q_{2k-1},$  for every $k=1,...,n-1,$  if $\p=2l$.
\end{enumerate}
\end{lemma}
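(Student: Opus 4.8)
The plan is to realize $C=C_1\cup C_2$ on $R=R_1\cup_E R_2$ by prescribing, on the elliptic curve $E$, the divisor that each rational component of $C_1$ and $C_2$ cuts out there, and then recovering the components from the linear systems in which they move on $R_i$ (which is $\mathbb F_0$ for odd $\p$, $\mathbb F_1$ for even $\p$). I first record the two facts about $\Pic E$ that I will use: since $R_i$ is the scroll attached to $L_i\in\Pic^2(E)$, the ruling cuts the $g^1_2$ of $|L_i|$ on $E$, so $F_i|_E\sim L_i$; and since $\mathcal O_{R_i}(1)=\mathcal O_{R_i}(\sigma_i+lF_i)$ restricts to $\mathcal O_E(1)$, also $\sigma_i|_E\sim\mathcal O_E(1)-lL_i$. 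Moreover, because $h^0(R_i,D-E)=0$ for each of the classes $D=\sigma_i,\ \sigma_i+F_i,\ F_i,\ \sigma_i+(nl-1)F_i$ (resp.\ $\sigma_i+(nl-n)F_i$), a dimension count shows that $|D|$ restricts to the \emph{complete} linear series of its degree on $E$; hence a smooth rational curve in $|D|$ is determined by its trace on $E$ whenever that trace has the prescribed shape and lies in the restricted system. The numerical hypotheses on $(\p,n)$ are precisely what makes all the tacnode multiplicities at least $1$ and all the coefficients of $F_i$ above nonnegative.

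Next, the chain construction. Fix a general $p\in E$. Define $q_{2n}$ (resp.\ $q_{2n-1}$) to be the unique point with $p+q_{2n}\sim F_1|_E$ (resp.\ $p+q_{2n-1}\sim F_2|_E$), with the rôles of the two indices swapped according to the parity of $n$ as in \eqref{odd1}, and take $D_1\in|F_1|$, $D_2\in|F_2|$ to be the corresponding members. I then construct the remaining points $q_j$ and the curves $C_i^j$ recursively by descending through the relations \eqref{odd2} (resp.\ \eqref{even2}--\eqref{even4}): each such relation reads, in $\Pic E$, as ``the new point is linearly equivalent to $\sigma_i|_E$ (resp.\ $(\sigma_i+F_i)|_E$) minus the points already constructed'', which pins the new point down up to finitely many choices (in the even case this involves extracting a square root). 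Finally I choose $q$ to be any point for which $(2nl-3)q+2p+q_1\sim(\sigma_1+(nl-1)F_1)|_E$ --- there are finitely many, and at least one, since $2nl-3\geq 1$ --- and let $L_1\in|\sigma_1+(nl-1)F_1|$ be the member with that trace; the even case is identical, with $(nl-1)$ replaced by $(nl-n)$ and $x=q_2$ or $q_1$ according to $n$.

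The gluing step is the heart of the argument. It remains to produce $L_2$, i.e.\ to check that $(2nl-3)q+2p+q_2\sim(\sigma_2+(nl-1)F_2)|_E$ for the \emph{same} $q$ already chosen; subtracting the defining relation of $q$, this is the single identity $(\sigma_1-\sigma_2)|_E+(nl-1)(F_1-F_2)|_E\sim q_1-q_2$ in $\Pic E$. By the formulas of the first paragraph the left-hand side equals $((n-1)l-1)(L_1-L_2)$, and the right-hand side can be computed by unwinding the chain of the second paragraph together with $p+q_{2n}\sim L_1$ and $p+q_{2n-1}\sim L_2$; a direct calculation, distinguishing the parity of $n$, shows the two sides agree. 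Granting this, $L_2$ exists, and a bookkeeping of all the prescribed traces shows that $C_1|_E$ and $C_2|_E$ coincide as divisors --- both equal $3p+(2nl-3)q+\sum_{i=1}^{2n}q_i$ in the odd case, and the corresponding divisor (with the $q_{2k}$ doubled) in the even case --- so $C:=C_1\cup C_2$ is a genuine Cartier divisor in $|\mathcal O_R(nH)|$, and letting $p$ move in $E$ produces the asserted one-parameter family.

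It remains to read off the singularities. For general $p$, and general choices in the recursion, the points $p,q,q_1,\ldots,q_{2n}$ together with the sixteen special points of $E$ are pairwise distinct, each rational component of $C$ is smooth, the components meet pairwise transversally away from $E$ --- contributing the nodes off $E$ --- and no $C_i^j$ passes through $p$ or $q$. At $p$ the two branches of $C_i$ are $D_i$ and $L_i$; since $D_i\cdot L_i=1$ and both pass through $p$ they meet only there, with $L_i$ tangent to $E$ (because $L_i|_E$ contains $2p$) and $D_i$ transverse to it, so $C_i$ has at $p$ a node with one branch tangent to $E$; by the Remark preceding Lemma \ref{lemma: analytic-type}, $C$ then has a non-planar quadruple point at $p$, of analytic type \eqref{eq: quadruple-point} by Lemma \ref{lemma: analytic-type}. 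At $q$ (and, in the even case, at each $q_{2k}$) the curve $C$ has one branch in $R_1$ and one in $R_2$, each tangent to $E$ to the prescribed order, which is a tacnode of that order (cf.\ \cite{galati-knutsen}); at the remaining $q_j$ the two local branches, one in each $R_i$, meet $E$ transversally and so give nodes. This exhausts the singularities of $C$. The main obstacle is the $\Pic E$ identity in the gluing step --- together with the attendant check that $C_1|_E=C_2|_E$ --- whose proof is a somewhat delicate, parity-sensitive induction along the chain; a secondary point is the identification of the high tacnodes at $q$, for which one invokes the local analysis of curves having branches on the two components of a normal-crossing surface, as in \cite{galati-knutsen}.
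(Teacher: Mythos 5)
Your construction follows the same architecture as the paper's proof (a chain of fibres and sections determined by a general $p\in E$, closed up by the two tangency curves $L_1,L_2$), but at the gluing step you and the paper diverge, and your version is left unfinished at precisely the point you yourself call the main obstacle. You reduce the existence of $L_2$ to the identity $q_1-q_2\sim((n-1)l-1)(L_1-L_2)$ in $\Pic E$ and then only assert that "a direct calculation, distinguishing the parity of $n$, shows the two sides agree." The identity is in fact true (it telescopes from the chain relations, e.g. for $n=1$ both sides are $-(L_1-L_2)$, for $n=3$ both are $(2l-1)(L_1-L_2)$), so this part is incomplete rather than wrong; but note that the paper avoids the computation altogether: since $D_1+C_1^1+\cdots+C_1^{n-1}+L_1\in|\mathcal O_{R_1}(nH)|$, its trace $(2nl-3)q+3p+q_1+\cdots+q_{2n}$ lies in $|\mathcal O_E(nH)|$, and subtracting the traces of $D_2$ and the $C_2^j$ shows that $(2nl-3)q+2p+q_2$ lies in the class of $(\sigma_2+(nl-1)F_2)|_E$; completeness of the restricted series (both dimensions equal $2nl-1$) then gives a unique $L_2$ with exactly that trace. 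This global argument buys the equality $C_1|_E=C_2|_E$ for free, with no parity bookkeeping.

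The genuine gap is part (3) of the lemma off $E$: you dispose of the transversality of $L_i$ with the $C_i^j$ (and $D_i$) by "general choices in the recursion," but once $p$ is fixed the points $q,q_1,\ldots,q_{2n}$ and all components of $C$ are determined up to finitely many choices, so the configuration moves only in the one-parameter family $\mathcal W$ obtained by varying $p$; there is no ambient generic-position argument available, and it could a priori happen that $L_i$ is tangent to some $C_i^j$ for every $p$. The paper spends roughly a third of its proof on exactly this: assuming a non-transverse intersection, it analyzes a curve $D$ in the tangent space $T_{[C]}\mathcal W$, which, being an equisingular direction, must satisfy the tacnodal conditions at $q$, pass through the nodes off $E$, and have local equations of type \eqref{equisingulardeformationquad2} at $p$; these constraints force $D$ to contain, one after another, the components $C_1^1$, $D_2$, $D_1$, $C_2^1$ (and so on for larger $n$) and finally to coincide with $C$, so that $T_{[C]}\mathcal W=\{[C]\}$, contradicting $\dim\mathcal W=1$. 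Your proposal contains no argument playing this role, and the same genericity shortcut also hides the verification that the constrained $L_i$ (tangent at the non-general point $q$ and through $q_i$) is smooth, irreducible, and tangent to exactly the prescribed orders, which the paper handles through its analysis of the families $W^i_{2,2nl-3}$ and their tangent spaces.
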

\begin{proof} We first consider the case $\p=2l+1$ odd. Recall that, in this case, we have that $R_1\simeq R_2\simeq\mathbb F_0,$
$\mathcal O_{R_i}(H)=\mathcal O_{R_i}(\sigma_i+l F_i),$ where $|\sigma_i|$ and $|F_i|$
are the two rulings on $R_i,$ and the linear equivalence class of $E$ on $R_i$ is
$E\sim_{R_i}2\sigma_i+2 F_i,$ $i=1,2.$

Let $p$ and $q$ be two distinct points of $E= R_1\cap R_2\subset R$ and let us denote by
$W_{2,2nl-3}^i(p,q)\subset |\sigma_i+(nl-1)F_i|$ the family of divisors tangent to $E$ at $p$ and $q$ with
multiplicity $2$ and $2nl-3,$ respectively. Then $W_{2,2nl-3}^i(p,q)\subset |\sigma_i+(nl-1)F_i|$
 is a linear system of dimension $$\dim(W_{2,2nl-3}^i(p,q))\geq \dim(|\sigma_i+(nl-1)F_i|)-2nl+1=0.$$
Moreover, it is very easy to see that the general element of $W_{2,2nl-3}^i(p,q)$ corresponds to an irreducible and reduced 
smooth rational curve. By applying \cite[Proposition 2.1]{harris}, one shows that, in fact, $\dim(W_{2,2nl-3}^i(p,q))=0.$ We deduce that 
the variety $W_{2,2nl-3}^i\subset |\sigma_i+(nl-1)F_i|,$ parametrizing divisors tangent to $E$ at two distinct 
points with multiplicity $2$ and $2nl-3$ respectively, has dimension $2.$ Furthermore, looking at the tangent space 
to  $W_{2,2nl-3}^i$ at its general element, it is easy to prove that, if $p$ and $q$ are two general points of $E,$
then the unique divisor $L_i\in |\sigma_i+(nl-1)F_i|,$ tangent to $E$ at $p$ with multiplicity $2$ and to $q$ with multiplicity
$2nl-3,$ intersects $E$ transversally at a point $q_i\neq p,\,q.$ 

Now let $p$ be a general point of $E$ and let $D_i\subset R_i$ be the fibre $D_i\sim F_i$ passing through 
$p.$ Because of the generality of $p$ and the fact that $ED_i=2,$ we may assume that $D_1$ and $D_2$ 
intersect $E$ transversally at a further point. Again by the generality of $p,$ we may assume that the fibre
 $C^{n-1}_1 \sim \sigma_1$ (resp. $C^{n-1}_2 \sim \sigma_2$)
passing through the point of $D_2\cap E$ (resp. $D_1\cap E$), different from $p,$ 
intersects transversally $E$ at a further point.
Let $C^{n-2}_2$ (resp. $C^{n-2}_1$) be
the fibre of $|\sigma_2|$ (resp. $|\sigma_1|$) passing through this point. We repeat this argument $n-1$ times,
getting $2n$ points $q_1,...,q_{2n}$ of $E$ and fibres $C^{j}_i\sim\sigma_i,$ with $1\leq j\leq n-1,$
verifying relations \eqref{odd1} and \eqref{odd2}. From what we proved above,
by using that 
\begin{itemize}
\item the family of divisors in $|\sigma_1+(nl-1)F_1|$ tangent to $E$ at $p,$ with 
multiplicity $2,$ and at a further point, with multiplicity $2nl-3,$ has dimension $1;$ 
\item the point $p$ is general and the points $q_1,...,q_{2n}$ are determined by $p$
by the argument above;
\end{itemize}
we find that there exists a point $q$ such that the unique divisor $L_1\in W^1_{2,2nl-3}(p,q)$
passes through $q_1.$ Now $D_1+L_1+C_1^1+...+C_1^{n-1}\in |\mathcal O_{R_1}(nH)|$
and hence $(D_1+L_1+C_1^1+...+C_1^{n-1})E=(2nl-3)q+3p+q_1+\dots+q_{2n}\in |\mathcal O_{E}(nH)|.$
This implies that there exists on $R_2$ a divisor 
$$L_2\sim \sigma_2+(nl-1)F_2\sim (nH)|_{R_2}-D_2-C_2^1-...-C_2^{n-1}$$
cutting on $E$ the divisor $(2nl-3)q+2p+q_2.$ Moreover, $L_2$ is uniquely determined 
by the equality $\dim(|O_E(\sigma_2+(nl-1)F_2)|=\dim(|\mathcal O_{R_2}(\sigma_2+(nl-1)F_2)|)=2nl-1.$
Now, if $C_i=  C_i^1\cup...\cup C_i^{n-1}\cup D_i\cup L_i\subset R_i,$ then there exist only finitely many curves
like $C=C_1\cup R_2\subset R$ and passing through a fixed general point $p\in E.$  
 If $p$ varies on $E,$ then the curves $C,$ constructed in this way, move in a one parameter family
of curves $\mathcal W\subset |\mathcal O_R(n)|.$ By construction and by Lemma \ref{lemma: analytic-type},
 the curve $C$ has a space quadruple point of analytic equations  \eqref{eq: quadruple-point} at $p$ and nodes or tacnodes
 at points $q,q_1,...,q_{2n},$ as in the statement.
It remains to prove that, if $p$ is general or, equivalently,
if $[C]\in\mathcal W$ is general, then the singularities of $C$ on $R\setminus E$ are nodes.
Since $D_iL_i=D_iC_i^j=1,$ for every $i$ and $j,$ this is equivalent to showing that 
 $L_i$ intersects transversally $C_i^j,$ for every $i$ and $j.$ If $n=1$ there is nothing to show.
Just to fix ideas, we prove the statement for $n=2.$ Our argument trivially extends to the general case.
Let $[C]\in \mathcal W\subset |\mathcal O_R(2)|$ be a general element.
First observe that, since $\mathcal W$ is contained in the equisingular deformation 
locus $ES(C)\subset |\mathcal O_R(2H)|$ of $C,$  
it follows that, if $D$ is the curve corresponding to a point $[D]$ of the tangent
space $T_{[C]}\mathcal W,$ then $D$ has at $q$ a $(2nl-4)$-tacnode 
(cf. \cite[Proof of Theorem 3.3]{galati-knutsen}). Now assume that $L_1$ intersects
$C_1^1$ at $r\leq 2l-1$ points $\{x_i\}.$ Let $m_i\geq 1$ be the intersection multiplicity of $L_1$ with $C_1^1$ at $x_i.$ 
Then, the analytic equation of $C$ at $x_i$ is $y^2=x^{2m_i}$
and, by \cite[Proposition (5.6)]{diaz_harris}, the localization at $x_i$ of the equisingular deformation ideal of $C$ is $(y,x^{2m_i-1}).$
Thus $D$ must be tangent to $C^1_1$ at every point $x_i$ with multiplicity $2m_i-1.$
Similarly, $D$ must contain the intersection point of $C_1^1$ with $D_1.$ It follows that the cardinality
of intersection of $D|_{R_1}$ and $C_1^1$ is given by
$$
\sum_i(2m_i-1)+1=2l-1+\sum_im_i-r+1>2l,\,\,\mbox{if}\,\,m_i\geq 2\,\,\mbox{for some}\,\, i.
$$
We deduce that $C_1^1\subset D|_{R_1}.$ Using again that $D$ passes through every node
of $C$ and that, by Lemma \ref{lemma: analytic-type} and Remark \ref{eq-eq-in}, 
the curve $D|_{R_i}$ must be tangent to $E$ at $p,$ for $i=1,2,$ we obtain that $D$ contains the points 
$C_1^1\cap E, C_2^1\cap D_2$ and $p$ of $D_2.$ Thus, since the intersection number $D_2D|_{R_2}=2,$ 
we have that $D_2\subset D|_{R_2}.$  It follows that the analytic equations of $D\subset \mathcal X$ at $p$
are given by \eqref{equisingulardeformationquad2}, where $c=0$ and $u^\prime_z(x,y,z)=zu_z^{\prime\prime}(x,y,z).$ 
In particular, we find that $D|_{R_1}$ has a node at $p$ with one branch tangent to $E$
and the other one tangent to $D_1.$ Again, we find that $D|_{R_1}$ contains at least three points of $D_1,$
counted with multiplicity, and hence $D_1\subset D|_{R_1}.$ This implies, by repeating the same argument,
that $C_2^1\subset D|_{R_2}.$ Thus, for every $i=1,2,$ we have that $D|_{R_i}=D_i^\prime\cup D_i\cup C_i^1,$
where $D_i^\prime\sim \sigma_i+(2l-1)F_i,$ $D_i^\prime$ is tangent to $E$ at $p$ with multiplicity $2$ and at $q$ with multiplicity $2l-4$ and, finally, $D_i^\prime$ contains $q_i.$ 
But there is a unique divisor in $|\sigma_i+(2l-1)F_i|$ with these properties. We deduce that $D_i^\prime=L_i,$
for every $i=1,2,$ $D=C$
and $T_{[C]}\mathcal W=\{[C]\},$ getting a contradiction. This completes the proof
in the case $\p=2l+1$.

We now consider the case $\p=2l,$ where $R_1\simeq R_2\simeq \mathbb F_1,$
$\mathcal O_{R_i}(H)=\mathcal O_{R_i}(\sigma_i+l F_i),$ with $\sigma_i^2=-1$ and $F_i^2=0,$
and $E\sim_{R_i}2\sigma_i+3 F_i,$ for every $i=1,2.$
The proof of the lemma works  as in the previous case, except for the construction
of the curves $C_i^j\sim\sigma_i+F_i.$ Let
$p\in E$ be a general point and let $D_i\sim F_i$ be the fibre passing through $p,$ for $i=1,2.$ 
Because of the generality of $p,$
the curve $D_i$ intersects $E$ at a further point, say $q_{2n}$ if $i=1$ and $q_{2n-1}$ if $i=2.$
Now the curves in $|\sigma_2+F_2|$ passing through $q_{2n}$ cut out on $E,$
outside $q_{2n},$ a $g^1_2$ having, because of the generality of $p,$  four simple ramification
points. Let $C^{n-1}_2$ be one of the four curves in  $|\sigma_2+F_2|$ passing through $q_{2n}$
and simply tangent to $E$ at a further point $q_{2n-2}\neq q_{2n-1}, q_{2n}.$ Then, denote by 
$C_1^{n-2}\sim \sigma_1+F_1$ the unique curve tangent to $E$ at $q_{2n-2}$ and let $q_{2n-3}$ be
the further intersection point of $C^{n-2}_1$ with $E.$ Now repeat the same argument until you get 
curves $C^j_i,$ with $i=1,2$ and $j=1,...,n-1,$ and points $q_1,...,q_{2n}$ satisfying relations   
\eqref{even1}-\eqref{even4}. Again by the generality of $p,$ there exists a point $q$
such that the unique (smooth and irreducible) divisor $L_2\in |\sigma_2+(nl-n)F_2|,$ tangent
to $E$ at $p$ with multiplicity $2$ and at $q$ with multiplicity $2nl-2n-2,$ passes through $q_1.$
It follows that there exists a unique (smooth and irreducible) divisor $L_1\in |\sigma_1+(nl-n)F_1|$
passing through $q_{2n-1}$ and 
tangent to $E$ at $p$ and $q$ with multiplicity $2$ an $2nl-2n-2$ respectively. The curve we constructed has tacnodes, nodes and a space quadruple point of analytic
equations \eqref{eq: quadruple-point} on $E$, as desired. To see that the singularities of this curve outside $E$
are nodes, argue as in the case $\p=2l+1.$
 \end{proof}
 
 We may now prove Theorem \ref{main-theorem}.

\begin{proof}[Proof of Theorem \ref{main-theorem}]
Let $T^1_R$ be the first cotangent bundle of $R,$ defined by the standard
exact sequence 
\begin{equation}\label{normal_bundle_R}
\xymatrix{
0  \ar[r] &      \Theta_R  \ar[r] &      \Theta_{\mathbb P^{\p}}|_R \ar[r] 
& \mathcal N_{R|\mathbb P^{\p}} \ar[r] &   T^1_R \ar[r] &     0. }
\end{equation}
Since $R$ is a variety with normal crossings, by \cite[Section 2]{F}, we know that $T^1_R$
is locally free of rank one. In particular, 
$T^1_R\simeq \mathcal N_{E|R_1}\otimes\mathcal N_{E|R_2}$ is a degree $16$
line bundle on $E.$ Fix a general divisor $\xi_1+...+\xi_{16}\in |T^1_R|.$
Since the family of curves
constructed in the previous lemma is a one parameter family, we may always assume
that there exists a curve $C=C_{1}\cup C_{2}\in |\mathcal O_{R}(nH)|,$ with 
$C_i=  C_i^1\cup...\cup C_i^{n-1}\cup D_i\cup L_i,$ as in the statement of Lemma
\ref{limiti}, such that $q_{2n}=\xi_1$ and $q_j\neq \xi_k,$
for every $j\leq 2n-1$ and $k\leq 16.$ Now, by \cite[Corollary 1]{clm}, we have that
the induced map $H^0(R,\mathcal N_{R|\mathbb P^{\p}})\to H^0(R,T^1_R)$ is surjective.
By \cite[Theorems 1 and 2]{clm} and related references (precisely, \cite[Remark 2.6]{F}
and \cite[Section 2]{GH}) and because of the generality of $\xi_1+...+\xi_{16}\in |T^1_R|,$ 
it follows that there exists a deformation $\mathcal{S} \to T$ of $\mathcal S_0=R$ whose general fiber
is a smooth projective  $K3$ surface $\mathcal S_t$ in $\mathbb P^{\p}$ with 
$\Pic (\mathcal S_t) \cong \mathbb Z [\mathcal O _{\mathcal S_t}(1)]\cong \mathbb Z [H]$ and 
such that $\mathcal S$ is singular exactly at the
 points $\xi_1,...,\xi_{16}\in E.$
Let $\mathcal X\to T$ be the smooth family of surfaces  obtained by blowing-up $\xi_1,...,\xi_{16}$
and by contracting the corresponding exceptional components on $R_2,$ in such a way  that
$\mathcal X_0=R_1\cup \tilde{R_2},$ where $\tilde{R_2}$ is the blowing-up of $R_2$ at the points  
$\{\xi_1, \ldots, \xi_{16}\},$ with new exceptional curves $E_1, \ldots,\,E_{16}.$
Let  us denote by $\tilde C$  
the proper transform of $C$  and by
$\pi^*(C)=\tilde C\cup E_{1}$ the pull-back of $C$ with respect to
$\pi:\mathcal X\to\mathcal S.$ Now $\pi^*(C)$ has one more node at the point $x\in E_{1}\cap \tilde C$ on $\tilde R_2\setminus E.$
We want to prove the existence of irreducible curves $C_t\in|\mathcal O_{\mathcal X_t}(nH)|$
with the desired singularities by deforming of the curve $\pi^*(C).$ The irreducibility of $C_t$ easily follows from the 
fact that $\mathcal O_{\mathcal X_t}(H)$ is indivisible.

We first consider the case $p=2l+1.$ In this case  the singularities of the curve $\pi^*(C)$ are given by
\begin{itemize}
\item $2(n-1)(nl-1)$ nodes $y_1^i,...,y^i_{(n-1)nl},$ on $R\setminus E,$ arising from the intersection of the 
curves $C_{i}^{j}$, for $1 \leq j \leq n-1,$ with $L_i,$ for every $i=1,\,2;$
\item a node at $x\in E_1;$
\item $2n-2$ nodes $z_1^i,...,z_{n-1}^i,$ $i=1,2,$ arising from the intersection of $D_i$ with $C_i^j,$ for every $j\leq n-1$ and $i=1,2;$
\item $2n$  nodes at $q_1,...,q_{2n}\in E,$ where now $q_{2n}=E_1\cap E;$ 
\item a $(2nl-3)$-tacnode at $q$ and
\item a space quadruple point of analytic equations \eqref{eq: quadruple-point} at $p.$
\end{itemize}
Now, as we already observed in the proof of the previous lemma,  the tangent space
$T_{[\pi^*(C)]}ES(\pi^*(C))$ to the equisingular deformation locus  of $\pi^*(C)$ in $|\mathcal O_{\mathcal X_0}(nH)|$
is contained in the linear system of divisors $D=D^1\cup D^2\in|\mathcal O_{\mathcal X_0}(nH)|$ passing through 
the nodes of $\pi^*(C)$ on $\mathcal X_0\setminus E$ ($x$ included); having a $(2nl-4)$-tacnode at the point $q$ of $\pi^*(C)$
and having local analytic equations given by \eqref{equisingulardeformationquad2} at $p$.
This implies that, if $D\in T_{[\pi^*(C)]}ES(\pi^*(C)),$ then $D|_{R_2}$ contains the irreducible component of
$\tilde C$ passing through $x.$ It follows that $E_1\subset D|_{R_2}$ and so on, until, doing the same
local analysis of $D$ at $p$ as in the proof of the previous lemma, we find that
$$\dim(T_{[\pi^*(C)]}ES(\pi^*(C)))
=\{[\pi^*(C)]\}.$$
Using that the nodes of $\pi^*(C)$ at the points $q_i$ are trivially
preserved by every section of $H^0(C,\N_{\pi^*(C)|\mathcal X_0}),$ we deduce the injectivity  
of the standard morphism
$$\Phi:H^0(\pi^*(C),\N_{\pi^*(C)|\mathcal X})\to T,$$ where
$$T= \oplus_{j,i}T^1_{\pi^*(C),y_j^i}\oplus_{j,i}T^1_{\pi^*(C),z_j^i} \oplus T^1_{\pi^*(C),x}\oplus T^1_{\pi^*(C),q}\oplus T^1_{\pi^*(C),p}.$$
In particular, we have that $\Phi$ has image of dimension 
$$
\dim(Im(\Phi))=h^0(\pi^*(C),\N_{\pi^*(C)|\mathcal X})=2n^2l+2=\dim(|\mathcal O_{\mathcal X_t}(nH)|)+1.$$ 
Morever, by Corollary \ref{ingenerale} and by  \cite[Corollary 3.6]{galati-knutsen},
we know that  the image of 
the morphism $\Phi$ must be contained in 
$$T^\prime=\oplus_{j,i}T^1_{\pi^*(C),y_j^i}\oplus_{j,i}T^1_{\pi^*(C),z_j^i}\oplus T^1_{\pi^*(C),x}
\oplus H_p\oplus H_q\subset T,$$ 
where $H_p\subset T^1_{\pi^*(C),p}$ and $H_q\subset T^1_{\pi^*(C),q}$ are linear spaces of dimension $5$
and $2nl-3$ respectively. We first study the map $\Phi$ in the case that $2nl-3=1$ i.e. for $n=2$ and $l=1$ or $n=1$ and $l=2.$
In this case the curve
$\pi^*(C)$ has a $1$-tacnode, i.e. a node, at $q\in E$ and $H_q=T^1_{\pi^*(C),q}.$
Using again that every node of $\pi^*(C)$ on $E$
is trivially preserved by every section of $H^0(\pi^*(C),\N_{\pi^*(C)|\mathcal X_0}),$ we obtain that the induced
morphism
$H^0(\pi^*(C),\N_{\pi^*(C)|\mathcal X})\to \oplus_{j,i}T^1_{\pi^*(C),y_j^i}\oplus_{j,i}T^1_{\pi^*(C),z_j^i} \oplus T^1_{\pi^*(C),x}
\oplus H_p$
is injective. In fact this morphism is also
 surjective by virtue of the equality $$2(n-1)(nl-1)+2n-2+1+5=2n^2l-2nl-2n+2+2n+4=2n^2l+2.$$ 
By versality and by the proof of Theorem \ref{th: triple-point}, it follows that 
we may deform $\pi^*(C)$ to an irreducible curve $C_t\in |\mathcal O_{\mathcal X_t}(nH)|$
such that $T_{C_t}ES(C_t)=0,$ by preserving all
nodes of $\pi^*(C)$ on $\mathcal X_0 \setminus E$ and by deforming the singularity of $\pi^*(C)$ at $p$ to an ordinary
triple point. 
Now we study the morphism $\Phi$ under the assumption that $2nl-3\geq 2.$ In this case
$\Phi$ is not surjective. More precisely, 
we have that
$$
\dim(T^\prime)=2(n-1)(nl-1)+2n-2+1+5+2nl-3=2n^2l+3$$
and $Im(\Phi)=\Phi(H^0(\pi^*(C),\N_{\pi^*(C)|\mathcal X}))$ 
 is a hyperplane in $T^\prime,$ containing  the image $\Phi(H^0(\pi^*(C),\N_{\pi^*(C)|\mathcal X_0}))$
of $H^0(\pi^*(C),\N_{\pi^*(C)|\mathcal X_0})$ as a codimension $1$ subspace. 
Now, using on $T^1_{\pi^*(C),p}$ the affine coordinates $(b_1,b_2,b_3,a_1,a_2,a_3,a_4)$
introduced in the proof
of Lemma \ref{lemma: analytic-type}, by the proof of Theorem \ref{th: triple-point}, by \cite[Proof of Theorem 3.3]{galati-knutsen}
and by a straightforward dimension count, we have that $\Phi(H^0(\pi^*(C),\N_{\pi^*(C)|\mathcal X_0}))$
coincides with the $(2n^2l+1)$-plane
$$ 
\oplus_{j,i}T^1_{\pi^*(C),y_j^i}\oplus_{j,i}T^1_{\pi^*(C),z_j^i}\oplus T^1_{\pi^*(C),x}
\oplus \Gamma_p\oplus \Gamma_q,
$$
where $\Gamma_p\subset H_p\subset T^1_{\pi^*(C),p}$ is the $4$-plane of equations $b_1=b_2=b_3=0$
and $\Gamma_q\subset H_q\subset T^1_{\pi^*(C),q}$ is the $(2nl-4)$-plane parametrizing $(2nl-3)$-nodal curves. 
In particular, we have that 
$$
\Phi(H^0(\pi^*(C),\N_{\pi^*(C)|\mathcal X}))=\oplus_{j,i}T^1_{\pi^*(C),y_j^i}\oplus_{j,i}T^1_{\pi^*(C),z_j^i}\oplus T^1_{\pi^*(C),x}
\oplus\Omega,
$$ 
where $\Omega$ is a hyperplane in $H_p\oplus H_q$ such that $\Gamma_p\oplus\Gamma_q\subset \Omega.$
It trivially follows that the projection maps $\rho_p:\Omega\to H_p$ and $\rho_q:\Omega\to H_q$ are surjective.
Now, by the proof of Theorem \ref{th: triple-point}, we know that the locus of curves with a triple point
in $H_p$ is the smooth curve $T$ 
having equations $a_1=a_2=a_3=b_2=b_3=4b_1-a_4^2=0$ and 
intersecting $\Gamma_p$ only at $\underline 0$. 
Similarly, by \cite[proof of Theorem 3.3]{galati-knutsen}, for every $(m-1)$-tuple of non-negative
 integers $d_2,\ldots,d_m$ such that $\sum_{k=2}^m(k-1)d_k=2nl-4,$ the locus  
 $V_{\small 1^{d_2},\,2^{d_3},...,\,{m-1}^{d_m}}\subset H_q$ of points corresponding
 to curves with $d_k$ singularities of type $A_{k-1},$ for every $k,$ is a reduced (possibly
 reducible) curve intersecting $\Gamma_q$ only at $\underline 0.$ It follows that,
 for every $(m-1)$-tuple of non-negative
 integers $d_2,\ldots,d_m$ such that $\sum_{k=2}^m(k-1)d_k=2nl-4,$
  the locus $(T\times V_{\small 1^{d_2},\,2^{d_3},...,\,{m-1}^{d_m}})\,\cap \Omega$
   is a reduced (possibly reducible) curve whose parametric equations may be explicitly 
   computed by arguing exactly as in \cite[proof of Lemma 4.4, p.381-382]{ch}.   This proves, by versality,
that we may deform $\pi^*(C)$ to an irreducible curve $C_t\in |\mathcal O_{\mathcal X_t}(nH)|,$ by preserving all
nodes of $\pi^*(C)$ at $y_j^i,$ $z_j^i$ and $x,$ by deforming the singularity of $\pi^*(C)$ at $p$ to an ordinary
triple point and the $(2nl-3)$-tacnode of $\pi^*(C)$ to $d_k\geq 0$ singularities of type $A_{k-1}$ for every
$m$-tuple of integers $d_k$ such that $\sum_{k=1}^m((k-1)d_k)=2nl-4.$
In particular, if $k=2$ and $d_2=2nl-4,$ the corresponding curve $C_t$ is an elliptic curve in
$|\mathcal O_{\mathcal X_t}(nH)|$ with an ordinary triple point and nodes as singularities.
Finally, by the injectivity of the morphism $\Phi,$ we have that, if $ES(C_t)$ is the equisingular deformation
locus of $C_t$ in $|\mathcal O_{\mathcal X_t}(nH)|,$ then $\dim(T_{[C_t]}ES(C_t))=0$ and this implies that
 the singularities of $C_t$ may be smoothed independently, i.e.
 that the standard morphism $H^0(C_t,\N_{C_t|\mathcal X_t})\to T^1_{C_t}$ is surjective. This proves the theorem for $\p=2l+1.$

In the case $\p=2l$ the singularities of the curve $\pi^*(C)$ are given by
\begin{itemize}
\item $2(n-1)(nl-n)$ nodes $y_1^i,...,y^i_{(n-1)nl},$ on $R\setminus E,$ arising from the intersection of the 
curves $C_{i}^{j}$, for $1 \leq j \leq n-1,$ with $L_i,$ for every $i=1,\,2;$
\item a node at $x\in E_1;$
\item $2n-2$ nodes $z_1^i,...,z_{n-1}^i,$ $i=1,2,$ arising from the intersection of $D_i$ with $C_i^j,$ for every $j\leq n-1$ and $i=1,2;$
\item $n-1$  simple tacnodes at $q_2,q_4,...,q_{2n-2}\in E$ and nodes at $q_1,q_3,...,q_{2n-1},q_{2n},$ where
now $q_{2n}=E_1\cap E;$
\item a $(2nl-2n-2)$-tacnode at $q;$
\item a space quadruple point of analytic equations \eqref{eq: quadruple-point} at $p.$
\end{itemize}
With the same argument as above, one may prove that the curve $\pi^*(C)$ may be deformed
to a curve $C_t\in |\mathcal O_{\mathcal X_t}(nH)|,$ by preserving all
nodes of $\pi^*(C)$ at $y_j^i,$ $z_j^i$ and $x,$ by deforming the singularity of $\pi^*(C)$ at $p$ to an ordinary
triple point, every simple tacnode of $\pi^*(C)$ on $E$ to a node and the $(2nl-2n-2)$-tacnode of $\pi^*(C)$ to $d_k$ singularity of type $A_{k-1}$ for every
$m$-tuple of non negative integers $d_k$ such that $\sum_{k=1}^m((k-1)d_k)=2nl-2n-3.$ The curve $C_t$
obtained in this way has the desired singularities, is a reduced point for the equisingular
deformation locus and its singularities may be smoothed independently. Finally, if we choose $k=2$ and $d_2=
2nl-2n-3,$ then $C_t$ is an elliptic irreducible curve
with a triple point and nodes as singularities.
\end{proof}

\begin{corollary}\label{main-corollary}
Let $(S,H)$ be a general primitively polarized $K3$ surface of genus ${\p}=p_a(H)$ as above.
 Then, for every $1\leq g\leq p_a(nH)-3$ and for every $(\p,n)\neq (4,1),$ 
 there exist reduced and irreducible curves in $|nH|$
 of geometric genus $g$ with an ordinary triple point and nodes as singularities
 and corresponding to regular points of their equisingular deformation locus.
\end{corollary}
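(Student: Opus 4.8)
The plan is to obtain Corollary~\ref{main-corollary} as an essentially formal consequence of Theorem~\ref{main-theorem}, applied with $m=2$, so that the only prescribed singularities besides the ordinary triple point are ordinary nodes (an $A_{k-1}$-singularity with $k=2$ being a node), and then to use the independent-smoothing assertion of that theorem to reach every admissible genus.

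First I would record the genus bookkeeping. Since $h^1(\mathcal O_S(nH))=h^2(\mathcal O_S(nH))=0$ we have $\dim|nH|=p_a(nH)=n^2(\p-1)+1$, and a reduced irreducible curve $C\in|nH|$ whose only singularities are an ordinary triple point (which has $\delta$-invariant $3$) and $N$ nodes has geometric genus $g=p_a(nH)-3-N$. So it is enough to produce, for every integer $N$ with $0\le N\le p_a(nH)-4$, a reduced irreducible curve in $|nH|$ with an ordinary triple point and exactly $N$ nodes and no other singularity, corresponding to a regular point of its equisingular deformation locus.

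Next I would invoke Theorem~\ref{main-theorem} with $m=2$, in which case \eqref{ppari} and \eqref{pdispari} force
\[
d_2=\begin{cases} n(\p-2)-3, & \p\ \text{even},\\ n(\p-1)-4, & \p\ \text{odd}.\end{cases}
\]
The last paragraph of that theorem then provides, for every $\delta_2\le d_2$ and every $\delta\le p_a(nH)-d_2-4$, a reduced irreducible curve $C\in|nH|$ with an ordinary triple point, $\delta_2$ nodes of type $A_1$, and $\delta$ further nodes, and no other singularity, lying at a regular point of $ES(C)$. As $(\delta_2,\delta)$ ranges over $\{0,\dots,d_2\}\times\{0,\dots,p_a(nH)-d_2-4\}$, the total number of nodes $N=\delta_2+\delta$ attains every value in $\{0,1,\dots,p_a(nH)-4\}$: take $(\delta_2,\delta)=(N,0)$ if $N\le d_2$, and $(\delta_2,\delta)=(d_2,\,N-d_2)$ otherwise. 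Choosing $N=p_a(nH)-3-g$ for a prescribed $1\le g\le p_a(nH)-3$ then yields the curve required by the corollary.

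There is no serious obstacle: the only point that is not purely formal is the verification of the two numerical inequalities $0\le d_2\le p_a(nH)-4$ used above. The upper bound $d_2\le n^2(\p-1)-3$ is immediate for $n\ge1$, $\p\ge3$. The lower bound $d_2\ge0$ can fail only when $n(\p-2)<3$ (resp.\ $n(\p-1)<4$), which for $\p\ge3$ happens exactly for $(\p,n)=(4,1)$, excluded by hypothesis, and for $(\p,n)=(3,1)$, where $p_a(nH)-3=0$ and the assertion is vacuous. Hence the reduction to Theorem~\ref{main-theorem} goes through in the full stated range.
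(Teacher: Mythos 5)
Your reduction is exactly how the paper obtains the corollary: it is stated as an immediate consequence of Theorem \ref{main-theorem}, taking $m=2$ (all $A_{k-1}$-singularities being nodes) and using the independent-smoothing clause to realize every number of nodes $N$ with $0\le N\le p_a(nH)-4$, hence every genus $g=p_a(nH)-3-N$. Your genus bookkeeping via the $\delta$-invariant $3$ of the triple point and the verification that $d_2\ge 0$ fails only for $(\p,n)=(4,1)$ (excluded) and $(3,1)$ (vacuous) are correct, so the proposal is sound and essentially identical to the paper's intended argument.
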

In accordance with \cite{C1}, we do not expect the existence of rational curves in $|\mathcal O_S(nH)|$
with a triple point.
When $n=1$ and $\p\geq 5$, Theorem \ref{main-theorem} implies that the family in $|H|$ of curves with a 
triple point and $\delta_k$ singularities of type $A_{k-1}$ is non-empty whenever it has expected dimension
at least equal to one. The precise statement is the following.
\begin{corollary}\label{primitive-corollary}
Let $(S,H)$ be a general primitively polarized $K3$ surface of genus ${\p}=p_a(H)\geq 5.$ Then,
for every $(m-1)$-tuple of non-negative integers $d_2,\ldots,d_m$ such that
 \begin{equation*}
\sum_{k=2}^m(k-1)d_k=\p-5=\dim(|H|)-5,
\end{equation*}
there exist reduced irreducible curves $C$ in the linear system $|H|$ on $S$ 
 such that:
 \begin{itemize}
 \item $C$ has an ordinary triple point, a node and $d_k$ singularities
 of type $A_{k-1},$ for every $k=2,\ldots,m,$ and no further singularities;
 \item  $C$ corresponds to a regular point of the equisingular deformation locus $ES(C).$ Equivalently, 
 $\dim(T_{[C]}ES(C))=0.$ 
 \end{itemize} 
 Finally, the singularities of $C$ may be smoothed independently.
\end{corollary}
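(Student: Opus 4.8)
The plan is to obtain Corollary \ref{primitive-corollary} as the special case $n=1$ of Theorem \ref{main-theorem}, so essentially no new argument is needed. First I would record that, since $\p\geq 5$, the pair $(\p,n)=(4,1)$ excluded in Theorem \ref{main-theorem} does not occur, so the theorem applies with $n=1$ for every $\p$ in the range under consideration. (In fact the restriction $\p\geq 5$ is forced by the hypothesis itself: with non-negative integers $d_k$ the equation $\sum_{k=2}^m(k-1)d_k=\p-5$ is satisfiable only for $\p\geq 5$, which explains why the corollary is stated in exactly this range.)

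Next I would match the numerical hypotheses. For $n=1$, condition \eqref{ppari} reads $\sum_{k=2}^m(k-1)d_k=\p-2-3=\p-5$ when $\p=2l$ is even, and condition \eqref{pdispari} reads $\sum_{k=2}^m(k-1)d_k=\p-1-4=\p-5$ when $\p=2l+1$ is odd; in both cases this is precisely the assumption $\sum_{k=2}^m(k-1)d_k=\p-5=\dim(|H|)-5$ of the corollary. It then only remains to count the nodes produced by Theorem \ref{main-theorem}, namely $p_a(nH)-\sum_{k=2}^m(k-1)d_k-4$: since $n=1$ gives $p_a(H)=\dim(|H|)=\p$, this number equals $\p-(\p-5)-4=1$, so exactly one node appears besides the triple point and the prescribed $A_{k-1}$-singularities.

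Putting these observations together, Theorem \ref{main-theorem} with $n=1$ directly furnishes reduced irreducible curves $C\in|H|$ having an ordinary triple point, exactly one node, and $d_k$ singularities of type $A_{k-1}$ for every $k=2,\dots,m$ and no further singularities, with $C$ a regular point of $ES(C)$ (equivalently $\dim(T_{[C]}ES(C))=0$) and with the singularities of $C$ smoothable independently. Hence there is no genuine obstacle in the corollary itself: all the content is carried by Theorem \ref{main-theorem}, and the only work is the elementary bookkeeping above verifying that the $n=1$ instance of that theorem produces precisely a single node in addition to the triple point and the assigned $A_{k-1}$-singularities.
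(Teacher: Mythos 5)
Your proposal is correct and coincides with the paper's route: the corollary is stated there as a direct specialization of Theorem \ref{main-theorem} to $n=1$ (admissible since $\p\geq 5$ excludes $(\p,n)=(4,1)$), and your bookkeeping — both \eqref{ppari} and \eqref{pdispari} reducing to $\sum_{k}(k-1)d_k=\p-5$ and the node count $p_a(H)-(\p-5)-4=1$ — is exactly the verification the paper leaves implicit.
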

\begin{remark}
The case $\p=4$ and $n=1$ is the only case where the existence of elliptic curves with 
a triple point is expected but it is not treated in this paper. In this case, with the notation above, it is 
easy to show the existence of a unique curve $C\in |\mathcal O_{\mathcal X_0}(H)|$ with a space
quadruple point analytically equivalent to \eqref{eq: quadruple-point}. Because of the unicity,
the argument we used in the proof of Theorem \ref{main-theorem} 
to compute the dimension of the tangent space to the equisingular deformation locus
of $C$ does not apply. Actually, we expect that $\dim(T_{[C]}ES(C)) >0.$ Nevertheless,
it is easy to prove that $C$ deforms to curves $C_t\in |\mathcal O_{X_t}(H)| $
with a triple point as singularity on the general $K3$ surface $\mathcal X_t.$
But describing the equisingular deformation locus $ES(C_t)$
from the scheme-theoretic point of view seems to us to be a very difficult problem.  The
case $\p=4$ and $n=1$ will be treated in detail in an upcoming article on related topics.
\end{remark}

{}
\end{document}